\newtheorem{thm}{Theorem}[section]
\newtheorem{conj}[thm]{Conjecture}
\newtheorem{defin}[thm]{Definition}
\newtheorem{lm}[thm]{Lemma}
\newtheorem{smpl}[thm]{Example}
\newtheorem{rem}[thm]{Remark}
\newcommand{\w}{\mathsf{w}}
\newcommand{\x}{\mathsf{x}}
\newcommand{\y}{\mathsf{y}}
\newcommand{\e}{\mathsf{e}}
\newcommand{\R}{\mathbb{R}}
\newcommand{\Z}{\mathbb{Z}}
\renewcommand{\L}{\mathsf{L}}
\newcommand{\FF}{\mathcal{F}}
\newcommand{\GG}{\mathcal{G}} 
\newcommand{\1}{\mathbbm{1}}
\newcommand{\splamb}{\boldsymbol{\lambda}}
\newcommand{\spmu}{\boldsymbol{\mu}}
\newcommand{\sppi}{\boldsymbol{\pi}}
\newcommand{\cl}{\mathrm{cl}}
\newcommand{\opi}{{\boldsymbol{\pi}}}
\newcommand{\otau}{{\boldsymbol{\tau}}}
\begin{document}

\title{\textsf{The tropical critical points of an affine matroid}}

\author{
\textsf{Federico Ardila-Mantilla\footnote{\noindent \textsf{San Francisco State University, Universidad de Los Andes. federico@sfsu.edu. Supported by NSF Grant DMS-2154279.}}} 
\\ \and 
\textsf{Christopher Eur\footnote{\noindent \textsf{Harvard University; ceur@math.harvard.edu. Supported by  NSF Grant DMS-2001854.}}}  
\\ \and 
\textsf{Raul Penaguiao\footnote{\noindent \textsf{Max Planck Institute, Leipzig; 
raul.penaguiao@mis.mpg.de. Supported by  SNF Grant P2ZHP2 191301.}}}
}
%

\date{}

\maketitle

\begin{abstract}
We prove that the number of tropical critical points of an affine matroid $(M,e)$ is equal to the beta invariant of $M$. Motivated by the computation of maximum likelihood degrees, this number is defined to be the degree of the intersection of the Bergman fan of $(M,e)$ and the inverted Bergman fan of $N=(M/e)^\perp$, where $e$ is an element of $M$ that is neither a loop nor a coloop. Equivalently, for a generic weight vector $w$ on $E-e$, this is the number of ways to find weights $(0,x)$ on $M$ and $y$ on $N$ with $x+y=w$ such that on each circuit of $M$ (resp. $N$), the minimum $x$-weight (resp. $y$-weight) occurs at least twice. This answers a question of Sturmfels. 
\end{abstract}

\section{\textsf{Introduction}}

During the Workshop on Nonlinear Algebra and Combinatorics from Physics at the Center for the Mathematical Sciences and Applications at Harvard University in April 2022, Bernd Sturmfels \cite{Sturmfels2022} posed one of those combinatorial problems that is deceivingly simple to state, but whose answer requires a deeper understanding of the objects at hand. 


\begin{conj}\label{problem} \cite{Sturmfels2022} 
Let $M$ be a matroid on $E$, and let $e\in E$ be an element that is neither a loop nor a coloop. Let $M/e$ be the contraction of $M$ by $e$ and let  $N=(M/e)^\perp$ be its dual matroid.
\begin{enumerate}
\item
\emph{(Combinatorial version)}
Given a vector $\w \in \R^{E-e}$, we wish to find weight vectors
 $(0,\x) \in \R^E$ on $M$ (where $e$ has weight $0$) and $\y \in \R^{E-e}$ on $N$ such that

$\bullet$ on each circuit of $M$, the minimum $\x$-weight occurs at least twice,

$\bullet$ on each circuit of $N = (M/e)^\perp$, the minimum $\y$-weight occurs at least twice, and

$\bullet$ $\w=\x+\y$.

\noindent For generic $\w$, the number of solutions is the beta invariant $\beta(M)$. 
\item
\emph{(Geometric version)}
The degree of the stable intersection of the Bergman fan $\Sigma_{(M,e)}$ and the inverted Bergman fan $-\Sigma_N=-\Sigma_{(M/e)^\perp}$ is
\[
\deg\left( \Sigma_{(M,e)} \cdot -\Sigma_{(M/e)^\perp} \right)= \beta(M)\, .
\]
\end{enumerate}
\end{conj}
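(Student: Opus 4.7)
The combinatorial statement is the tropical stable intersection unpacked: at a generic $\w$, the degree counts pairs of transverse maximal cones of $\Sigma_{(M,e)}$ and $-\Sigma_N$ whose supports sum to $\w$, weighted by lattice indices. One verifies that these multiplicities are all $1$ from the flag-of-flats structure of Bergman cones, so the degree equals the number of solutions in the combinatorial statement. We focus on that count and argue by induction on $|E|$, matching the classical recurrence $\beta(M) = \beta(M\setminus f) + \beta(M/f)$ for $f$ neither loop nor coloop.

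The base case is $M = U_{1,2}$ on $\{e, f\}$: the unique circuit $\{e,f\}$ forces $x_f = 0$, and $N = U_{1,1}$ on $\{f\}$ has no circuits, so $(\x, \y) = (0, \w)$ is the unique solution, matching $\beta(U_{1,2}) = 1$. If some $f \in E - e$ is a loop (resp.\ coloop) of $M$, then $\{f\}$ is a circuit of $M$ (resp.\ of $N$) whose singleton min-twice condition is unsatisfiable, so the count is zero, matching $\beta(M) = 0$ for disconnected $M$. We may therefore assume $M$ is connected with $|E| \geq 3$, so every $f \in E - e$ is neither loop nor coloop.

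For the inductive step we derive $c(M, e) = c(M \setminus f, e) + c(M / f, e)$ by degenerating $\w$ along the ray $\w + t\e_f$ as $t \to +\infty$. Using the duality identities $(M/e)^\perp \setminus f = ((M/f)/e)^\perp$ and $(M/e)^\perp / f = ((M \setminus f)/e)^\perp$, which swap deletion and contraction between $M$ and $N$, solutions with $x_f$ bounded (so $y_f \to +\infty$) converge to solutions of the problem for $(M \setminus f, e)$, while those with $y_f$ bounded converge to solutions for $(M / f, e)$. The main obstacle is to make this splitting rigorous: showing every generic solution belongs to exactly one asymptotic regime, no solutions are born or lost along the ray, and the limiting tropical constraints are precisely those of the reduced problems—in particular handling the case where $e$ becomes a coloop of $M \setminus f$, where the count should vanish in step with $\beta(M \setminus f) = 0$. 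An attractive alternative would be a direct bijection between the tropical solutions and bases of $M$ of a $\beta$-enumerating type—e.g., $\w$-compatible bases $B \ni e$ with prescribed internal or external activity—sidestepping the limit argument entirely.
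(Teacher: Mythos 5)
Your reduction to the combinatorial count and your framing via the recurrence $\beta(M)=\beta(M\setminus f)+\beta(M/f)$ are reasonable, and the duality identities $(M/e)^\perp\setminus f=((M/f)/e)^\perp$ and $(M/e)^\perp/f=((M\setminus f)/e)^\perp$ are correct. But the argument has a genuine gap exactly where you flag it: the inductive step is the entire content of the theorem, and you do not prove it. Asserting that solutions along the ray $\w+t\e_f$ ``converge to'' solutions of the two reduced problems requires (i) a proof that for $t\gg 0$ every intersection point lies in exactly one of the two regimes $x_f$ bounded / $y_f$ bounded, (ii) a proof that the limiting constraints are precisely the Bergman conditions for $M\setminus f$ and $N/f$ (resp.\ $M/f$ and $N\setminus f$) --- note that killing the min-condition on circuits through $f$ does not literally produce the circuit conditions of a single minor, so one must actually compare the recession of $\Sigma_{(M,e)}$ in the direction $\e_f$ with the Bergman fans of the minors --- and (iii) a bijectivity statement in both directions, including that the degenerate cases ($e$ a coloop of $M\setminus f$, $e$ a loop of $M/f$, or a minor that is disconnected without $e$ being a loop or coloop) contribute zero. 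None of these is established. A smaller but real slip: from ``no $f\in E-e$ is a loop or coloop'' you conclude ``$M$ is connected,'' which does not follow (e.g.\ $U_{1,2}\oplus U_{1,2}$); disconnected loopless, coloopless matroids have $\beta=0$ and must be accounted for.

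For comparison, the paper takes precisely the ``attractive alternative'' you mention in your last sentence and avoids all limiting arguments. It fixes a single \emph{rapidly increasing} generic $\w$ (entries growing by factors $>3$ with respect to an order on $E$) and proves a direct bijection between intersection points of $\Sigma_{(M,e)}$ and $\w-\Sigma_N$ and the $\beta$-nbc bases of $M$, whose number is $\beta(M)$ by Ziegler's theorem. The key device is the \emph{intersection graph} of the two set partitions underlying a pair of intersecting cones $\sigma_{\FF}$, $\sigma_{\GG}$: transverse intersection forces this graph to be a tree (which also gives multiplicity $1$), and the explicit alternating-path formulas for $\x$ and $\y$ on that tree, combined with the rapid growth of $\w$, pin down the minima of the flag steps and identify the associated basis as $\beta$-nbc. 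If you want to salvage your route, the deletion--contraction degeneration can likely be made to work, but it is substantially more delicate than the sketch suggests; the explicit-bijection route is the shorter path.
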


The goal of this paper is to prove this conjecture.

\begin{thm}  \label{thm:main} 
Versions 1. and 2. of \Cref{problem} are true.
\end{thm}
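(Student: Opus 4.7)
My plan is to prove both versions of the conjecture by showing that the quantity
\[
D(M,e) := \deg\bigl(\Sigma_{(M,e)}\cdot -\Sigma_{(M/e)^\perp}\bigr)
\]
satisfies the same deletion-contraction recurrence and boundary conditions as the beta invariant. Recall that for any element $f$ of $M$ that is neither a loop nor a coloop, $\beta(M) = \beta(M\setminus f) + \beta(M/f)$; that $\beta(M) = 0$ whenever $M$ contains a loop or a non-trivial coloop; and that $\beta(U_{1,2}) = 1$. The combinatorial version of the conjecture follows from the geometric one via the standard dictionary between stable-intersection multiplicities and tropical solutions (solving $\x + \y = \w$ is the same as intersecting $\Sigma_{(M,e)}$ with the translated inverted fan $\w - \Sigma_N$), so it suffices to prove the geometric version.

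\textbf{Setup, base case, and vanishing.} I would first verify the dimension count. Viewing $\Sigma_{(M,e)}\subset \R^{E-e}$ as the restriction of $\Sigma_M\subset \R^E/\R\mathbf{1}$ to $x_e=0$, its dimension is $r(M)-1$; the lift of $\Sigma_N$ to $\R^{E-e}$ has dimension $|E|-r(M)$; these sum to $|E|-1=\dim \R^{E-e}$, so the stable intersection is zero-dimensional for generic $\w$. For $M=U_{1,2}$ one checks $D(U_{1,2},e)=1$ by hand. If $M$ has a loop $\ell\neq e$, then $\{\ell\}$ is a circuit of $M$ whose minimum $\x$-weight cannot be attained twice, so $D(M,e)=0$. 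If $M$ has a coloop $\ell\neq e$, then $\ell$ is a coloop of $M/e$, hence a loop of $N$, and the analogous argument for circuits of $N$ gives $D(M,e)=0$.

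\textbf{The main step.} The heart of the proof is the identity
\[
D(M,e) = D(M\setminus f, e) + D(M/f, e)
\]
for $f\in E\setminus\{e\}$ neither a loop nor a coloop of $M$; the matroid identities $((M\setminus f)/e)^\perp = (M/e)^\perp/f$ and $((M/f)/e)^\perp = (M/e)^\perp\setminus f$ make the right-hand side correctly normalized. I see two natural approaches. The combinatorial one partitions the solutions $(\x,\y)$ of the tropical system for $(M,e,\w)$ according to the local behavior at $f$: those in which $f$ is a strict circuit-minimum for $\y$ correspond to deletion of $f$ from $N$ (equivalently contraction from $M$), while those in which $f$ is tied for a circuit minimum correspond to contraction of $f$ in $N$ (deletion from $M$). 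The geometric route uses the Chow ring of a permutohedral toric variety, expressing both $[\Sigma_{(M,e)}]$ and $[-\Sigma_N]$ in the standard $x_F$-basis and invoking the known deletion-contraction behavior of Bergman classes under the projection $\R^E\to\R^{E-f}$.

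\textbf{Main obstacle.} The hardest step will be rigorously establishing this recurrence, because the stable intersection does not visibly commute with matroid deletion and contraction, and tracking multiplicities at each intersection point requires careful local analysis near $f$ (combinatorial route) or bookkeeping of Chow-ring generators with pullback and pushforward along the projection forgetting $f$ (geometric route). I expect the geometric approach to be cleaner: once both classes are expressed in terms of standard generators, the recurrence should reduce to an algebraic identity of the type already developed in recent work on matroid Chow rings by Adiprasito--Huh--Katz, Eur--Huh--Larson, and others.
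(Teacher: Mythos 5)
Your proposal is a strategy outline rather than a proof: the entire difficulty of the theorem is concentrated in the deletion--contraction identity $D(M,e) = D(M\setminus f,e) + D(M/f,e)$, and you do not establish it by either of the routes you sketch. On the combinatorial side, the claim that solutions split according to whether $f$ is a strict or tied circuit-minimum for $\y$ is not a bijection you can just assert: one must show that a generic $\w$ on $E-e$ restricts to a generic weight for both minors, that each solution for $(M,e)$ actually descends to a solution for exactly one of the two minors, and conversely that every solution for a minor lifts uniquely --- none of which is evident, since the Bergman fan of $M$ is not the union or fibered sum of the fans of $M\setminus f$ and $M/f$ in any direct way. On the geometric side, there is no off-the-shelf identity expressing the product $[\Sigma_{(M,e)}]\cdot[-\Sigma_{(M/e)^\perp}]$ on $\Sigma_E$ as a sum of the corresponding products on $\Sigma_{E-f}$; relating Chow classes across the projection $\R^{E}\to\R^{E-f}$ for both factors simultaneously is a genuine project, not a citation. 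There is also an unaddressed edge case: even when $f$ is neither a loop nor a coloop of $M$, the element $e$ may become a loop of $M/f$ or a coloop of $M\setminus f$ (e.g.\ when $e$ and $f$ are parallel or in series), in which case the terms $D(M/f,e)$ or $D(M\setminus f,e)$ are not defined by the problem's hypotheses and your induction does not close. Your base cases and dimension count are fine, but they are the easy part.

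For contrast, the paper sidesteps the recurrence entirely. For version 1 it exploits the tropical fact that the number of intersection points is the same for every generic $\w$, chooses $\w$ rapidly increasing with respect to an order on $E$, and exhibits an explicit bijection between the intersection points and the $\beta$-nbc bases of $M$ (whose count is $\beta(M)$ by a theorem of Ziegler); the key tool is the intersection graph of the two flag partitions, which must be a tree at any intersection point. For version 2 it works in the Chow ring of the permutohedral fan and reduces the statement to the known degree formula $\deg([\Sigma_{(M,e)}]\cdot c_r(\mathcal S_M^\vee))=\beta(M)$ from the theory of tautological classes, by checking that the difference $c_r(\mathcal S_M^\vee)-[-\Sigma_{(M/e)^\perp}]$ is annihilated by $[\Sigma_{(M,e)}]$ because their product is divisible by the global linear form $t_e$. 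If you want to salvage your approach, you would need to prove the recurrence in full, which is likely at least as hard as either of these arguments.
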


The affine matroid $(M,e)$ is the matroid $M$ with a special chosen element $e$. 
The Bergman fan of $(M,e)$ is the Bergman fan of $M$ intersected with the hyperplane $x_e=0$. The other relevant definitions are given in Section \ref{sec:tropgeom}.
The combinatorial and geometric formulations of \Cref{problem} are equivalent because in the stable intersection above, all intersection points have multiplicity 1 \cite[Lemma 7.4]{agostini2021likelihood}.

\begin{figure}[h]
\begin{center}
\includegraphics[scale=.9]{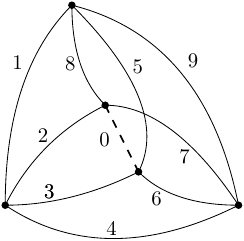}
\qquad
\includegraphics[scale=.9]{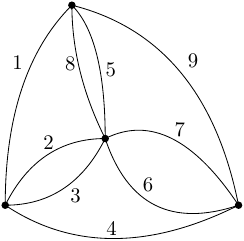}
\qquad
\includegraphics[scale=.8]{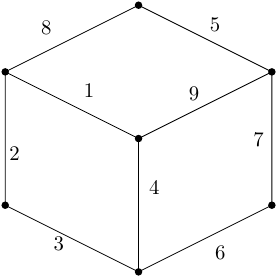}
\end{center}
\caption{A graph $G$, its contraction $G/0$, and its dual $H=(G/0)^\perp$.\label{./img:dualmatroid}}
\end{figure}

The results of this paper were motivated by the problem of computing maximum likelihood degrees in algebraic statistics, pioneered by Catanese, Khetan, Ho\c{s}ten and Sturmfels \cite{Catanese}. For linear models, Varchenko showed that the maximum likelihood degree equals the beta invariant of the corresponding matroid; see \cite{Varchenko, Zaslavsky} and  \cite[Theorem 13]{Catanese}.

%
%
%
%
%
%

Agostini, Brysiewicz, Fevola, K\"uhne, Sturmfels, and Telen  first encountered a special case of \Cref{problem} in \cite{agostini2021likelihood}.
Using algebro-geometric results of Huh and Sturmfels \cite{HuhSturmfels}, which built on earlier work of Varchenko \cite{Varchenko}, they proved \Cref{thm:main}  for matroids realizable over the real numbers \cite[Theorem 7.1]{agostini2021likelihood}. 
In a related setting of linear Gaussian models, the maximum likelihood degrees were shown to be matroid invariants of the linear subspace \cite{SU10, EFSS21}.

We prove \Cref{thm:main} for all matroids.
Following the original motivation, we call the solutions to \Cref{problem}.1 the \emph{tropical critical points} of the affine matroid; our main result is that they are counted by Crapo's beta invariant $\beta(M)$. We do something stronger. Agostini et.\ al.\ write 
\begin{quote}
``we would like to describe the multivalued map that takes any tropical data vector $\w$ to the set of its critical points". \cite[Section 7]{agostini2021likelihood}
\end{quote}
We give an explicit formula for this map for all $\w$ that are rapidly increasing, under any order $<$ on the ground set $E$.

In Section \ref{sec:3} we prove  \Cref{thm:main}.1 combinatorially, relying on the tropical geometric fact that the number of solutions is the same for all generic\footnote{We will say that a property holds for generic $\w \in \R^n$ if it holds for all $\w$ outside of a polyhedral complex of dimension smaller than $n$.}
$\w$. We show that when the entries of $\w$ are rapidly increasing with respect to some order $<$ on $E$, the solutions to \Cref{problem}.1 are naturally in bijection with the $\beta$-nbc-bases of the matroid with respect to $<$. It is known that the number of such bases is the beta invariant of the matroid, regardless of the order $<$.

In Section \ref{sec:4} we sketch a proof of \Cref{thm:main}.2 that relies the theory of \emph{tautological classes of matroids} of Berget, Eur, Spink, and Tseng \cite{BEST}.
This proof is not combinatorial; it relies on computations in the equivariant Chow ring of the permutahedral variety initiated in \cite{BEST} and extended here.
We do not know of a direct relationship between our two proofs.  For a survey of the relationships and differences between these proof techniques in a similar setting, see \cite{Ard24}.

\section{\textsf{Notation and preliminaries\label{sec:2}}}

\subsection{\textsf{The lattice of set partitions}}

A set partition $\splamb$ of a set $E$ is a collection of subsets, called blocks,  of $E$, say $\splamb = \{\lambda_1 , \dots , \lambda_{\ell}\}$, whose union is $E$ and whose pairwise intersections are empty.
We write $\splamb \models E$. We let $|\splamb| = \ell$ be the number of blocks of $\splamb$.
If $e\in E$ and $\splamb \models E$, we write $\splamb(e)$ for the block of $\splamb $ that contains $e$.

We define the \emph{linear space of a set partition $\splamb = \{\lambda_1, \ldots, \lambda_\ell\} \models E$} to be
\begin{eqnarray*}
 \L (\splamb ) &\coloneqq&  \textrm{span}\{{\e}_{\lambda_1}, \ldots, {\e}_{\lambda_\ell}\} \, \subseteq \R^E  \\ 
 &=& 
\{ \x \in \R^E  \, \,  | \,  x_i = x_j \text{ whenever } i, j \text{ are in the same block of } \splamb\},
  \end{eqnarray*}
where $\{\e_i \, : \, i \in E\}$ is the standard basis of $\R^E$ and $\e_S = \sum_{s \in S} e_s$ for $S \subseteq E$.
Notice that $\dim \L (\splamb) = |\splamb|$.
The map $\splamb \mapsto \L(\splamb)$ is a bijection between the set partitions of $E$ and the flats of the 
\emph{braid arrangement}, which is the hyperplane arrangement in $\R^E$
given by the hyperplanes $x_i=x_j$ for $i \neq j$ in $E$. 
%

If $e \in E$ then we write 
$\L(\splamb)|_{x_e=0} = \{\x \in \R^{E-e} \, : \, (0, \x) \in \L(\splamb) \subseteq \R^E\}$.

\subsection{\textsf{The intersection graph of two set partitions}}

We denote $[a, b] \coloneqq \{a, a+1, \ldots, b-1, b\}$ and $[n] \coloneqq [1, n]$.
The following construction from \cite{ArdilaEscobar} will play an important role.

\begin{defin}
Let $\splamb \models [0,n]$ and $\spmu \models [n]$ be set partitions. 
The \emph{intersection graph} $\Gamma = \Gamma_{\splamb,\spmu}$
is the bipartite graph
with vertex set $\splamb \sqcup \spmu$ and edge set $[n]$, where the edge labelled $e$ connects the parts $\lambda(e)$ of $\splamb$ and $\mu(e)$ of $\spmu$ containing $e$. 
The vertex  $\lambda(0)$ is marked with a hollow point. 
\end{defin}

 \begin{figure}[h]
 \begin{center}
 \includegraphics[scale=1.2]{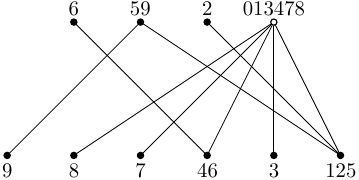} \qquad 
 \includegraphics[scale=1.3]{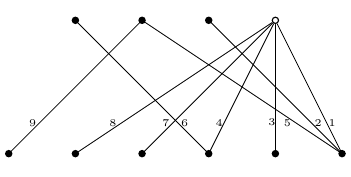} 
 \end{center}
 \caption{The intersection graph of $\splamb = \{6,59,2,013478\}\models [0,9]$ and $\spmu = \{9,8,7,46,3,125\}\models [9]$. 
 We omit brackets for legibility.
 \textbf{Left:} The vertices are labelled by the blocks of the set partitions.  \textbf{Right:} The edges are labelled by the elements of $[9]$. 
\label{fig:intgraph}}	
\end{figure}

The intersection graph may have several parallel edges connecting the same pair of vertices. Notice that the label of a vertex in $\Gamma$ is just the set of labels of the edges incident to it. Therefore we can remove the vertex labels, and simply think of $\Gamma$ as a bipartite multigraph on edge set $[n]$. This is illustrated in Figure \ref{fig:intgraph}.

\begin{lm}\label{lm:arboreal}
Let $\splamb \models [0,n]$ and $\spmu \models [n]$ be set partitions and $\Gamma_{\splamb, \spmu}$ be their intersection graph.
\begin{enumerate}
\item
If $\Gamma_{\splamb, \spmu}$ has a cycle, then $\L(\splamb)|_{x_0=0} \cap (\w - \L(\spmu)) = \emptyset$ for generic
$\w \in \R^n$.
\item
If $\Gamma_{\splamb, \spmu}$ is disconnected, then $\L(\splamb)|_{x_0=0} \cap (\w - \L(\spmu))$ is not a point for any $\w \in \R^n$.
\item
If $\Gamma_{\splamb, \spmu}$ is a tree, then $\L(\splamb)|_{x_0=0} \cap (\w - \L(\spmu))$ is a point for any $\w \in \R^n$.
\end{enumerate}
\end{lm}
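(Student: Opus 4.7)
The plan is to translate the intersection of affine subspaces into a system of linear equations on the bipartite multigraph $\Gamma_{\splamb, \spmu}$, and then read off the three cases from elementary graph theory.

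The reformulation is as follows. Any $\x \in \L(\splamb)|_{x_0=0}$ is uniquely described by a value $a_B \in \R$ for each block $B$ of $\splamb$ (with $a_{\lambda(0)} = 0$), where $x_i = a_{\lambda(i)}$ for $i \in [n]$. Likewise, any $\y \in \L(\spmu)$ is given by values $c_C \in \R$ on the blocks $C$ of $\spmu$. Under this parametrization, the condition $\x + \y = \w$ becomes one linear equation per edge of $\Gamma$:
\[
a_{\lambda(e)} + c_{\mu(e)} = w_e \qquad \text{for each } e \in [n],
\]
together with the pinning constraint $a_{\lambda(0)} = 0$. All three parts of the lemma will follow from analyzing this edge-labeled linear system.

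For part (1), I would take any cycle of $\Gamma$. Since $\Gamma$ is bipartite, the cycle has even length $2k$ with edges $e_1, \ldots, e_{2k}$, and the alternating sum of the corresponding equations telescopes, collapsing the $a_B$'s and $c_C$'s to give the relation $w_{e_1} - w_{e_2} + \cdots + w_{e_{2k-1}} - w_{e_{2k}} = 0$. This is a nontrivial linear constraint on $\w$, so its complement has full measure in $\R^n$, establishing emptiness for generic $\w$. For part (2), observe that on each connected component $K$ of $\Gamma$ the transformation $a_B \mapsto a_B + t$ for $B \in K \cap \splamb$ and $c_C \mapsto c_C - t$ for $C \in K \cap \spmu$ preserves every edge equation. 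The constraint $a_{\lambda(0)}=0$ rigidifies only the component containing $\lambda(0)$, so every other component contributes a free parameter $t \in \R$. Thus if the system has any solution it has a positive-dimensional family, and can never be a single point.

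For part (3), when $\Gamma$ is a tree I would root it at $\lambda(0)$ and propagate values by breadth-first (or depth-first) search: starting from $a_{\lambda(0)}=0$, each tree edge to a new vertex uses its single equation to determine the unique value at that vertex. Because $\Gamma$ is acyclic the propagation never loops back to impose a compatibility condition, and because $\Gamma$ is connected every vertex is reached, so the solution exists and is unique for every $\w \in \R^n$. I do not expect a real obstacle here: the core content is the dictionary between the affine intersection and a flow-style linear system on $\Gamma$, and once it is set up each case is a short graph-theoretic observation; the only minor care needed is handling the possibly isolated vertex $\lambda(0)$ when $\lambda(0) = \{0\}$, which falls under the disconnected case unless $\Gamma$ has no other vertices at all.
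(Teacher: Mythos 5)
Your proposal is correct and follows essentially the same route as the paper: both translate the intersection into the edge-labeled linear system $a_{\lambda(e)}+c_{\mu(e)}=w_e$ with $a_{\lambda(0)}=0$, use the alternating sum along a cycle for part (1), the $\pm t$ shift on a component avoiding $\lambda(0)$ for part (2), and the tree structure for part (3). Your propagation-from-the-root argument for part (3) is just a more explicit version of the paper's dimension count plus linear independence, and in fact coincides with the explicit solution formula the paper records separately in Lemma \ref{lm:x,y}.
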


\begin{proof}
Let $\x \in \L(\splamb) $ and $\y \in \L(\spmu)$ such that $\x + \y = \w$.
Write $x_{\lambda(i)} \coloneqq x_i$ and $y_{\mu(i)} \coloneqq y_i$ for $i \in [n]$.
The subspace 
$\L(\splamb)|_{x_0=0} \cap (\w - \L(\spmu))$, 
can be naturally regarded as living in $\R^{\splamb \sqcup \spmu}$, where it 
is cut out by the equalities 
\begin{eqnarray*}
x_{\lambda(i)} + y_{\mu(i)} &=& w_i \qquad \textrm{ for } i \in [n], \\
x_{\lambda(0)} &=& 0.
\end{eqnarray*}

This system has $n+1$ equations and $|\splamb| + |\spmu|$ independent unknowns.
The linear dependences among these equations are controlled by the cycles of the graph $\Gamma_{\splamb, \spmu}$. More precisely, the first $n$ linear functionals $\{x_{\lambda(i)} + y_{\mu(i)} \, : \, i \in [n] \}$  on $\R^{\splamb \sqcup \spmu}$  give a realization of the graphical matroid of $\Gamma_{\splamb, \spmu}$. The last equation is clearly linearly independent from the others.

If $\Gamma_{\splamb, \spmu}$ has a cycle with edges $i_1, i_2, \ldots, i_{2k}$ in that order, then the above equalities imply that $w_{i_1} - w_{i_2} + w_{i_3} - \cdots - w_{i_{2k}} = 0$. For generic $\w$ this equation does not hold, so we have $\L(\splamb)|_{x_0=0} \cap (\w - \L(\spmu))= \emptyset$.

If $\Gamma_{\splamb, \spmu}$ is disconnected, let $A$ be the set of edges in a connected component not containing the vertex $\lambda(0)$. If $\x \in \L(\splamb)$ and $\y \in \L(\spmu)$ satisfy $\x+\y= \w$ and $x_0=0$, then $\x+r\e_A \in \L(\splamb)$ and $\y-r\e_A \in \L(\spmu)$ also satisfy those equations for any real number $r$. Therefore $\L(\splamb)|_{x_0=0} \cap (\w - \L(\spmu))$ is not a point.

Finally, if $\Gamma_{\splamb, \spmu}$ is a tree, then its number of vertices is one more than the number of edges, that is, $n + 1 = |\splamb| + |\spmu|$, so the system of equations has equally many equations and unknowns. Also, these equations are linearly independent since $\Gamma_{\splamb, \spmu}$ is a tree. It follows that the system has a unique solution.
\end{proof}

When $\Gamma_{\splamb, \spmu}$ is a tree, we call $\splamb$ and $\spmu$ an \emph{arboreal pair}. 

\begin{lm}\label{lm:x,y}
Let $\splamb \models [0,n]$ and $\spmu \models [n]$ be an arboreal pair of set partitions and let $\Gamma_{\splamb, \spmu}$ be their intersection tree. Let $\w \in \R^n$. The unique vectors $\x \in \L(\splamb)$ and $\y \in \L(\spmu)$ such that $\x + \y = \w$ and $x_{0}=0$ are given by
\begin{eqnarray*}
x_{\lambda_i} &=& w_{e_1}-w_{e_2} + \cdots \pm w_{e_k} \qquad \textrm{ where $e_1e_2\ldots e_k$ is the unique path from $\lambda_i$ to $\lambda(0)$} \\
y_{\mu_j} &=& w_{f_1}-w_{f_2} + \cdots \pm w_{f_l} \qquad \textrm{ where $f_1f_2\ldots f_l$ is the unique path from $\mu_j$ to $\lambda(0)$}
\end{eqnarray*}
for any $i$ and $j$.
\end{lm}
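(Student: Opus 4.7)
The plan is to reduce everything to \Cref{lm:arboreal}.3: that lemma already gives existence and uniqueness of the pair $(\x,\y)$, so it suffices to check that the explicit formula produces vectors satisfying $\x+\y=\w$, $x_0=0$, and the constancy conditions defining $\L(\splamb)$ and $\L(\spmu)$.

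First, I would observe that the formula is well-defined: since $\Gamma_{\splamb,\spmu}$ is a tree, there is a unique path from any vertex to the marked vertex $\lambda(0)$, and this path alternates between $\splamb$-vertices and $\spmu$-vertices (since the graph is bipartite), so the alternating sum in the statement makes sense. Moreover, the formula assigns a single value to each vertex, which automatically means $\x\in\L(\splamb)$ and $\y\in\L(\spmu)$ and $x_{\lambda(0)}=0$ (the empty path giving the empty sum $0$).

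Next, I would verify the equation $x_{\lambda(i)}+y_{\mu(i)}=w_i$ edge by edge. Fix an edge $i\in[n]$ and let $\lambda(i)=\lambda_a$ and $\mu(i)=\mu_b$ be its endpoints. Deleting edge $i$ from $\Gamma_{\splamb,\spmu}$ splits the tree into two components; exactly one of them contains $\lambda(0)$. Without loss of generality, say $\lambda_a$ lies in this component (the other case is symmetric). Then the unique path from $\mu_b$ to $\lambda(0)$ is obtained by prepending the edge $i$ to the unique path $P=e_1e_2\cdots e_k$ from $\lambda_a$ to $\lambda(0)$. Consequently
\[
x_{\lambda_a}=w_{e_1}-w_{e_2}+\cdots\pm w_{e_k},\qquad y_{\mu_b}=w_i-w_{e_1}+w_{e_2}-\cdots\mp w_{e_k},
\]
whose sum telescopes to $w_i$, as desired. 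The sign flip in $y_{\mu_b}$ is automatic because prepending one edge shifts the parities of all subsequent terms; this is the only slightly delicate point, and it is handled uniformly by the bipartite structure of $\Gamma_{\splamb,\spmu}$.

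The main (mild) obstacle is keeping the sign bookkeeping transparent, which I would address by phrasing the argument in terms of path concatenation rather than by induction on distance from $\lambda(0)$ — concatenation makes the telescoping immediate. Once the edge equation is verified and $x_{\lambda(0)}=0$ is noted, the formula must coincide with the unique solution guaranteed by \Cref{lm:arboreal}.3, completing the proof.
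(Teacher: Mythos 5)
Your proposal is correct and takes essentially the same approach as the paper: the paper's one-line proof propagates the edge relations $x_{\lambda(e_i)}+y_{\mu(e_i)}=w_{e_i}$ along the tree starting from $x_{\lambda(0)}=0$, while you verify the same relations for the stated formula via path concatenation and then invoke the uniqueness from Lemma~\ref{lm:arboreal}.3. The telescoping and sign bookkeeping are handled correctly.
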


\begin{proof}
This follows readily from the fact that, for each $1 \leq i \leq k$, the values of $x_{\lambda(e_i)}$ and $y_{\mu(e_i)}$ on the vertices incident to edge $i$ have to add up to $w_{e_i}$.
\end{proof}

\begin{smpl}
Let $\splamb = \{6,59,2,013478\}\models [0,9]$ and $\spmu = \{9,8,7,46,3,125\}\models~[9]$. These set partitions form an arboreal pair, as evidenced by their intersection tree, shown in Figure \ref{fig:intgraph}. We have, for example, $y_9 = w_9-w_5+w_1$ because the path from $\mu(9)=\{9\}$ to $\lambda(0)=\{013478\}$ uses edges $9,5,1$ in that order. The remaining values are:
\begin{eqnarray*}
&&x_6 = w_6-w_4, \quad x_{59} = w_5-w_1, \quad x_2 = w_2-w_1, \quad x_{013478} = 0,\\
&&y_9 = w_9-w_5+w_1, \quad y_8=w_8, \quad y_7 = w_7, \quad y_{46}=w_4, \quad y_3=w_3, \quad y_{125} = w_1.
\end{eqnarray*}
\end{smpl}

The tropical critical points of a matroid are better behaved for the following family of vectors.

\begin{defin}\label{def:super}
A vector $\w \in \R^{n}$ is \emph{rapidly increasing} if $w_{i+1} > 3 w_i > 0$ for $1 \leq i \leq n-1$.
\end{defin}

The next lemma is readily verified.

\begin{lm}\label{lm:omega}
Let $\w$ be rapidly increasing. For any 
 $1 \leq a < b \leq n$ and any choice of $\epsilon_i$s and $\delta_i$s in $\{-1,0,1\}$, we have
 $
w_a + \sum_{i=1}^{a-1}  \epsilon_i w_i < 
w_b + \sum_{j=1}^{b-1}  \delta_j w_j.
$
\end{lm}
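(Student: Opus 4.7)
The plan is a direct quantitative estimate: bound the left-hand side from above and the right-hand side from below, and show the two bounds are forced apart. Since each $\epsilon_i, \delta_j \in \{-1,0,1\}$, we have
$$w_a + \sum_{i=1}^{a-1} \epsilon_i w_i \;\leq\; w_a + \sum_{i=1}^{a-1} w_i, \qquad w_b + \sum_{j=1}^{b-1} \delta_j w_j \;\geq\; w_b - \sum_{j=1}^{b-1} w_j,$$
so it suffices to prove the strict inequality $w_a + \sum_{i < a} w_i < w_b - \sum_{j < b} w_j$.

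Next, I would exploit the rapid-growth hypothesis via a geometric-series bound. Iterating $w_i < w_{i+1}/3$ gives $w_i < w_N/3^{N-i}$ for all $i < N$, and since $w_N > 0$,
$$\sum_{i=1}^{N-1} w_i \;<\; w_N \sum_{k=1}^{\infty} 3^{-k} \;=\; \frac{w_N}{2}$$
for every $N \geq 1$ (the $N=1$ case degenerates to $0 < w_1/2$). Applying this with $N = a$ and $N = b$ yields $w_a + \sum_{i<a} w_i < \tfrac{3}{2} w_a$ and $w_b - \sum_{j<b} w_j > \tfrac{1}{2} w_b$.

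Finally, the sequence $w_1, w_2, \ldots$ is strictly increasing (since $w_{i+1} > 3 w_i > w_i$), so $a < b$ gives $w_b > 3 w_{b-1} \geq 3 w_a$, whence $\tfrac{1}{2} w_b > \tfrac{3}{2} w_a$. Chaining these three bounds produces the desired inequality. There is no real obstacle here: the factor $3$ in the rapid-growth condition was chosen precisely so that it dominates the geometric accumulation $\sum_{k\geq 1} 3^{-k} = \tfrac{1}{2}$ of smaller-index terms, ensuring that the weight $w_b$ outweighs every signed combination of weights with strictly smaller index.
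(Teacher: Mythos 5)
Your proof is correct, and since the paper leaves this lemma as "readily verified," your geometric-series estimate is exactly the intended verification: bounding $\sum_{i<N} w_i < w_N/2$ from the growth condition and then using $w_b > 3w_a$ to separate the two sides. Nothing to add.
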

%

\begin{defin}
Given a rapidly increasing vector $\w \in \R^{n}$ and a real number $x$, we will say $x$ \emph{is near} $w_i$ and write $x \approx w_i$ if  $w_i - (w_1+\cdots + w_{i-1}) \leq  x \leq w_i+(w_1+ \cdots + w_{i-1})$ for $i = 1, \ldots, n$. 
By Lemma \ref{lm:omega}, if $x \approx w_i$ and $y \approx w_j$ for $i<j$ then $x<y$.
\end{defin}

\subsection{\textsf{Matroids, Bergman fans, and tropical geometry}\label{sec:tropgeom}}

In what follows we will assume familiarity with basic notions in matroid theory; for definitions and proofs, see \cite{Oxley, Welsh}. 
We also state here some facts from tropical geometry that we will need; see \cite{maclagan2015introduction, mikhalkin2010tropical} for a thorough introduction.

Let $M$ be a matroid on $E$ of rank $r+1$. The \emph{dual matroid} $M^{\perp}$ is the matroid on $E$ whose set of bases is $\{B^\perp \, | \, B \text{ is a basis of $M$}\}$, where $B^\perp \coloneqq E-B$.
The following lemma is useful to see how $M$ and $M^\perp$ interact; see \cite[Lemma 3.14]{ADH1} and  \cite[Proposition 2.1.11]{Oxley} for proofs.

\begin{lm}
\label{lm:unionflat_coflat}
If $F$ is a flat of $M$ and $G$ is a flat of $M^\perp$, then $|F \cup G| \neq |E| - 1$.
\end{lm}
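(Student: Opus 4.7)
I will argue by contradiction. Assume $|F \cup G| = |E|-1$ and let $e$ be the unique element of $E \setminus (F \cup G)$, so $e \notin F$ and $e \notin G$. My goal is to produce a circuit $C$ of $M$ and a cocircuit $D$ of $M$ with $C \cap D = \{e\}$; this will contradict the standard circuit--cocircuit orthogonality axiom, which forbids a circuit and a cocircuit of the same matroid from meeting in exactly one element.

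To build $D$, I would use the standard characterization that the complement of a flat is a union of cocircuits of the same matroid (equivalently, for every element outside a flat there is a hyperplane containing the flat but missing that element). Applied to $F$ as a flat of $M$, this gives a cocircuit $D$ of $M$ with $e \in D \subseteq E \setminus F$. Applying the same principle to $G$ as a flat of $M^\perp$ yields a cocircuit of $M^\perp$, i.e.\ a circuit $C$ of $M$, with $e \in C \subseteq E \setminus G$. Then
\[
C \cap D \;\subseteq\; (E \setminus F) \cap (E \setminus G) \;=\; E \setminus (F \cup G) \;=\; \{e\},
\]
and since $e$ lies in both $C$ and $D$, we have $|C \cap D| = 1$, the forbidden configuration.

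The argument rests on only two textbook facts, so the main obstacle is minimal. The only care required is to track which matroid, $M$ or $M^\perp$, each circuit or cocircuit belongs to (the hypotheses on $F$ and $G$ live in dual matroids, but the resulting orthogonality contradiction is applied inside the single matroid $M$), and to notice that the hypothesis $|F \cup G| = |E| - 1$ is precisely what pins the intersection $C \cap D$ down to the single element $e$.
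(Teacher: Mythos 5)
Your argument is correct: the two facts you invoke (the complement of a flat is a union of cocircuits, and circuit--cocircuit orthogonality $|C\cap D|\neq 1$) are both standard, and the hypothesis $|F\cup G|=|E|-1$ does exactly pin $C\cap D$ down to $\{e\}$. The paper does not write out a proof but defers to \cite[Lemma 3.14]{ADH1} and \cite[Proposition 2.1.11]{Oxley}, and the argument there is essentially the one you give, so this is the intended route.
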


\begin{defin} \cite{Crapo}
The \emph{beta invariant} of $M$ is defined to be $\beta(M) \coloneqq (-1)^r \frac{d\chi_M (t)}{d t} \Big|_{t=1}$\, , 
where $\chi_M(t)$ is the \emph{characteristic polynomial} of $M$:
\[
\chi_M(t) \coloneqq \sum_{X \subseteq E} (-1)^{|X|}t^{r(M) - r(X)}.
\]
\end{defin}

\begin{defin}
Fix a linear order $<$ on $M$. 
A \emph{broken circuit} is a set of the form $C - \text{min}_< C$ where $C$ is a circuit of $M$. An \emph{nbc-basis} of $M$ is a basis of $M$ that contains no broken circuits. A \emph{$\beta$-nbc-basis} of $M$ is an nbc-basis $B$ such that $B^\perp \cup 0 \setminus 1$ is an nbc-basis of $M^\perp$.
\end{defin}

\begin{thm} \cite{Ziegler} For any linear order $<$ on $E$, the number of $\beta$-nbc-bases of $M$ is equal to the beta invariant $\beta(M)$.
 \end{thm}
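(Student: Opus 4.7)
The plan is to prove this identity by induction on $|E|$, using the deletion-contraction recursion for the beta invariant: $\beta(M) = \beta(M \setminus e) + \beta(M / e)$ whenever $e$ is neither a loop nor a coloop of $M$, together with the boundary values $\beta(U_{1,1}) = 1$ and $\beta(M) = 0$ whenever $M$ has a loop, or has a coloop along with at least one other element. The goal is to show that the number of $\beta$nbc-bases satisfies the same recursion and boundary values.

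For the inductive step, I would choose the distinguished element to be $e := \max_< E$. This differs from $0$ and $1$ as soon as $|E| \geq 3$, and its chief advantage is that $e$ is never the minimum of any circuit of $M$ or of $M^\perp$ that contains it, so every broken circuit ``involving'' $e$ actually contains $e$. Assuming $e$ is neither a loop nor a coloop, I would exhibit bijections
\[
\{\beta\text{nbc-bases } B \text{ of } M \text{ with } e \notin B\} \longleftrightarrow \{\beta\text{nbc-bases of } M \setminus e\},
\]
\[
\{\beta\text{nbc-bases } B \text{ of } M \text{ with } e \in B\} \longleftrightarrow \{\beta\text{nbc-bases of } M / e\},
\]
given by $B \mapsto B$ and $B \mapsto B - e$ respectively, where $E - e$ inherits the order from $<$ so that $0$ and $1$ remain the two smallest elements.

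To verify these bijections, I would use the standard descriptions of circuits under deletion and contraction, together with the duality identities $(M \setminus e)^\perp = M^\perp / e$ and $(M / e)^\perp = M^\perp \setminus e$. In the deletion case ($e \notin B$), the broken circuits of $M$ contained in $E - e$ are exactly the broken circuits of $M \setminus e$, because any circuit of $M$ containing $e$ yields a broken circuit that still contains $e$. In the contraction case ($e \in B$), a more delicate argument is needed: every broken circuit of $M/e$ lifts to a broken circuit of $M$ in a way compatible with $B \mapsto B - e$, and conversely every broken circuit of $M$ sitting inside $B$ either restricts to a broken circuit of $M/e$ inside $B - e$, or contains one as a subset via a proper sub-circuit of $M/e$. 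A parallel argument, using duality, handles the condition that $B^\perp \cup \{0\} \setminus \{1\}$ is nbc in $M^\perp$.

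The hardest part will be executing the broken-circuit correspondence in the contraction step cleanly, in particular handling the case where a set of the form $C - e$ (for $C$ a circuit of $M$ containing $e$) fails to be minimal and must be refined to a proper sub-circuit of $M/e$ in order to exhibit a broken circuit contained in $B - e$. Once this is established, the base cases ($|E| \leq 2$, or $e$ a loop or coloop, in which $\beta(M)$ equals $0$ or $1$) are handled by direct inspection, confirming the count of $\beta$nbc-bases matches $\beta(M)$ in each case.
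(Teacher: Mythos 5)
The paper itself gives no proof of this statement: it is imported from Ziegler, whose argument realizes the $\beta$nbc-bases as the homology facets of a shelling of the (reduced) broken circuit complex. Your deletion--contraction induction is therefore a genuinely different and more elementary route, and it is sound. The engine for both branches is Brylawski's decomposition of the broken circuit complex at the largest element: for $e=\max_< E$ neither a loop nor a coloop, the nbc-bases of $M$ avoiding $e$ are exactly the nbc-bases of $M\setminus e$ (a circuit through $e$ never has $e$ as its minimum, so its broken circuit still contains $e$), and $B\mapsto B-e$ bijects the nbc-bases containing $e$ with the nbc-bases of $M/e$. Your sketch of the delicate direction of the contraction case is the right one: if $C-\min C\subseteq B$ for a circuit $C$ of $M$, pick a circuit $C'$ of $M/e$ inside $C-e$; either $\min C\in C'$, whence $\min C'=\min C$ and $C'-\min C'\subseteq B-e$, or $\min C\notin C'$, whence $C'\subseteq B-e$ contradicts independence. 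Two bookkeeping points should be made explicit when you write this up. First, since deletion in $M$ is contraction in $M^\perp$ and conversely, the dual condition (that $B^\perp\cup 0\setminus 1$ be an nbc-basis of $M^\perp$) needs the \emph{contraction} half of the lemma in the \emph{deletion} branch and vice versa; this matches up because for $|E|\geq 3$ one has $e\neq 0,1$, so $e\in B^\perp\cup 0\setminus 1$ exactly when $e\notin B$. Second, the recursion must stop at $|E|=2$ (where $\max_< E=1$ is a distinguished element) and must be bypassed when $\max_< E$ is a loop (no nbc-bases at all) or a coloop (then it is a loop of $M^\perp$, so no $\beta$nbc-bases), in each case matching $\beta(M)=0$; you flag all of these. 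What Ziegler's shelling proof buys beyond yours is the topological refinement (a basis of top homology indexed by $\beta$nbc-bases); what yours buys is a short self-contained verification of the count with the dual-complement definition used in this paper, which is all that is needed here.
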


The \emph{closure} of a set $A \subseteq E$, denoted by $\cl_M(A)$, is the smallest flat $F$ containing $A$.
For each basis $B = \{b_1 > \dots > b_r>b_{r+1}\}$ of the matroid $M$, we define the complete flag of flats
\[
\FF_M(B) \coloneqq \{\emptyset  \subsetneq \cl_M \{b_1\} \subsetneq \cl_M \{b_1, b_2\} \subsetneq \cdots \subsetneq \cl_M \{b_1, \dots, b_r\}  \subsetneq E\}.
\]

The following characterization of nbc-bases will be useful.

\begin{lm}\cite[(7.30), (7.31)]{Bjorner} \label{lm:nbc_flats}
Let $M$ be a matroid of size $n+1$ and rank $r+1$, and $B$ a basis of $M$.
Then $B$ is an nbc-basis of $M$ if and only if $b_i = \min F_i$ for $i = 1, \dots, r+1$.
\end{lm}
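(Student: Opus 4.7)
My plan is to establish both implications by contradiction, exploiting the direct bridge between broken circuits and flats: for any circuit $C$, its minimum $c = \min C$ lies in $\cl_M(C \setminus c)$, so the broken circuit $C \setminus c$ is contained in a flat that already contains the smaller element $c$.

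For the forward direction, suppose $B$ is an nbc-basis and, for contradiction, that $b_i \neq \min F_i$ for some $i \in \{1, \dots, r+1\}$. Let $e = \min F_i$, so $e < b_i$ and $e \in F_i = \cl_M\{b_1, \dots, b_i\}$ (interpreting $F_{r+1} = E = \cl_M B$, since $M$ must be loopless for nbc-bases to exist). Then $\{e, b_1, \dots, b_i\}$ is dependent and hence contains a circuit $C$. Since $\{b_1, \dots, b_i\}$ is independent, we must have $e \in C$. Because $b_1 > \cdots > b_i > e$, the element $e$ is the minimum of $C$, so $C \setminus e$ is a broken circuit contained in $\{b_1, \dots, b_i\} \subseteq B$, contradicting that $B$ is nbc.

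For the reverse direction, assume $b_i = \min F_i$ for all $i$, and suppose for contradiction that $B$ contains a broken circuit $C \setminus c$, where $c = \min C$. Since $c \notin B$ and $c < b_j$ for every $b_j \in C$, let $b_{i^*}$ be the element of $C \setminus c$ with the largest index (equivalently, the smallest value among the $b_j \in C$). Then $C \setminus c \subseteq \{b_1, \dots, b_{i^*}\}$, so $c \in \cl_M(C \setminus c) \subseteq \cl_M\{b_1, \dots, b_{i^*}\} = F_{i^*}$. But $c < b_{i^*}$, contradicting $b_{i^*} = \min F_{i^*}$.

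The argument is essentially bookkeeping; the only point requiring care is the indexing convention on $B$ (listed in decreasing order, so larger index means smaller value), which has to be tracked consistently when identifying the $b_{i^*}$ that witnesses each contradiction. No deeper matroid machinery beyond the closure axiom and the definition of a circuit is required.
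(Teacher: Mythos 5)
The paper does not actually prove this lemma; it is imported verbatim from Bj\"orner's chapter (items (7.30) and (7.31)), so there is no in-paper argument to compare against. Your proof is a correct, self-contained derivation of the cited fact, resting only on the two standard observations you use: if $e\in \cl_M(I)\setminus I$ for an independent set $I$ then $I\cup e$ contains a (unique) circuit through $e$, and for any circuit $C$ and $c\in C$ one has $c\in \cl_M(C\setminus c)$. Both directions check out against the paper's indexing convention $b_1>\cdots>b_{r+1}$ with $F_i=\cl_M\{b_1,\dots,b_i\}$ and $F_{r+1}=E$. Two points are worth making explicit rather than leaving implicit: in the reverse direction, $c\notin B$ because otherwise $C=(C\setminus c)\cup\{c\}\subseteq B$ would contradict the independence of the basis $B$ (and this also rules out $C\setminus c$ being empty once loops are excluded); and the looplessness you flag parenthetically is genuinely needed, since a loop produces the empty broken circuit and hence no nbc-basis, which is the standing assumption in Bj\"orner's setting and is implicit in the paper's use of the lemma (there $0$ is assumed to be neither a loop nor a coloop, and nbc-bases are only invoked where they exist).
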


An \emph{affine matroid} $(M,e)$ on $E$ is a matroid $M$ on $E$ with a chosen element $e \in E$ \cite{Ziegler}.
The set $E$ is also called the ground set of $(M, e)$.

\begin{defin}\cite{Sturmfels} \label{defin:bergfan}
The \emph{Bergman fan} of a matroid $M$ on $E$ is
\[
\Sigma_M = \{\x \in \R^E \, | \, \min_{c\in C} x_c \text{ is attained at least twice for any circuit $C$ of $M$} \}\, .
\]
The \emph{Bergman fan of an affine matroid} $(M,e)$ on $E$ is
\[
\Sigma_{(M,e)} = \{\x \in \R^{E-e} \, | \, (0, \x) \in \Sigma_M\} = \Sigma_M|_{x_e=0}.
\]
\end{defin}

\begin{rem}\label{rem:proj}
The Bergman fan contains the lineality space $\1\R$.
Taking the quotient by this space, or intersecting with a coordinate linear hyperplane will give the same result, and typically the (projective) Bergman fan is defined in the quotient vector space $\R^E/{\1\R}$ in the literature.
\end{rem}


The motivation for this definition comes from tropical geometry.
A subspace $V \subset \R^E$ determines a matroid $M_V$ on $E$, and the tropicalization of $V$ is precisely the Bergman fan of $M_V$. Similarly, an affine subspace $W \subset \R^{E-e}$ determines an affine matroid $(M_W,e)$ on $E$, where $e$  represents the hyperplane at infinity. The tropicalization of $W$ is the Bergman fan $\Sigma_{(M_W,e)}$.

\begin{thm}\label{thm:bergman}\cite{ArdilaKlivans}
The Bergman fan of a matroid $M$ is equal to the union of the cones 
\begin{eqnarray*}
\sigma_\FF &=&  \textrm{cone}(\e_{F_1}, \ldots, \e_{F_{r+1}}) + \R\1\\ 
&=& \{ \x \in \R^E \, | \, x_a \geq x_b \text{ whenever $a\in F_i$ and $b\in F_j$ for some $1 \leq i \leq j \leq r+1$}\} \end{eqnarray*}
for the complete flags $\FF = \{\emptyset = F_0 \subsetneq F_1 \subsetneq \dots \subsetneq F_r \subsetneq F_{r+1} = E\}$ of flats of $M$.
It is a tropical fan with weights $w(\FF) = 1$ for all $\FF$.
\end{thm}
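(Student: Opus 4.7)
The plan is to prove both the set-theoretic equality and the tropical balancing condition by analyzing the upper level filtration of a point. For $\x \in \R^E$ and $v \in \R$, set $S_v(\x) = \{e \in E : x_e > v\}$. The first key step is to show that $\x \in \Sigma_M$ if and only if $S_v(\x)$ is a flat of $M$ for every $v$. This uses the circuit characterization of flats: a set $F$ is a flat precisely when $|C \setminus F| \neq 1$ for every circuit $C$. If $\x \in \Sigma_M$, then for any circuit $C$ and any $v$, either $C \subseteq S_v(\x)$ or at least two elements of $C$ attain $\min_C \x \leq v$; hence $|C \setminus S_v(\x)| \neq 1$. Conversely, if every level set is a flat, taking $v = \min_C \x$ gives $C \setminus S_v(\x) = \{e \in C : x_e = \min_C \x\}$, which must have cardinality $\neq 1$ and so is at least $2$.

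The second step converts the level-set description into the cone description. If $\x \in \Sigma_M$ has distinct coordinate values $u_1 > u_2 > \cdots > u_k$ with corresponding level sets forming a chain $F'_1 \subsetneq \cdots \subsetneq F'_k = E$ of flats, then
\[
\x = u_k\,\1 + \sum_{j=1}^{k-1}(u_j - u_{j+1})\,\e_{F'_j},
\]
with strictly positive coefficients. Any chain of flats can be refined to a complete flag $\FF = \{\emptyset \subsetneq F_1 \subsetneq \cdots \subsetneq F_{r+1} = E\}$, placing $\x$ in $\sigma_\FF$. Conversely, every point $\x = \sum_{i=1}^{r+1} c_i\,\e_{F_i} + c\,\1$ with $c_1, \dots, c_r \geq 0$ satisfies $x_e = c_i + c_{i+1} + \cdots + c_{r+1} + c$ for $e \in F_i \setminus F_{i-1}$, so its level sets are among the $F_i$ and hence flats, which places $\x$ in $\Sigma_M$ by Step 1. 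The inequality description of $\sigma_\FF$ follows since these values are weakly decreasing in $i$ when the $c_i$ are nonnegative.

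For the tropical fan structure with all top-cone weights equal to $1$, one must verify the balancing condition at each codimension-one cone $\sigma_{\FF^-}$, obtained from a complete flag $\FF$ by removing a flat $F_i$. The top cones containing $\sigma_{\FF^-}$ correspond bijectively to the flats $F$ with $F_{i-1} \subsetneq F \subsetneq F_{i+1}$, equivalently to the atoms of the rank-two matroid $(M|_{F_{i+1}})/F_{i-1}$ on ground set $F_{i+1} \setminus F_{i-1}$, which is loopless because $F_{i-1}$ is a flat. These atoms partition $F_{i+1} \setminus F_{i-1}$, so writing $\e_F = \e_{F_{i-1}} + \e_{F \setminus F_{i-1}}$ and summing over the $s$ flats $F$ in question yields
\[
\sum_F \e_F = (s-1)\,\e_{F_{i-1}} + \e_{F_{i+1}}.
\]
Both $\e_{F_{i-1}}$ and $\e_{F_{i+1}}$ lie in the linear span of $\sigma_{\FF^-}$, so the class $\sum_F [\e_F]$ vanishes modulo this span and the balancing condition holds with unit weights. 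The main subtlety is the bookkeeping in this last step — correctly identifying the primitive normals as $\e_F$ modulo $\mathrm{span}(\sigma_{\FF^-})$ and invoking the partition of $F_{i+1} \setminus F_{i-1}$ by atoms — after which the verification is the one-line computation above.
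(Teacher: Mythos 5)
The paper does not prove this theorem---it is quoted from the cited reference---and your argument is correct and is essentially the standard proof from that source: characterizing $\Sigma_M$ by the condition that all upper level sets $S_v(\x)$ are flats (via the criterion that $F$ is a flat iff $|C\setminus F|\neq 1$ for every circuit $C$), and verifying balancing at a codimension-one cone using the partition of $F_{i+1}\setminus F_{i-1}$ into the atoms of the rank-two minor $(M|_{F_{i+1}})/F_{i-1}$. The only points left implicit---that $M$ must be loopless for the statement to be nonvacuous, that the $\sigma_\FF$ indeed form a fan whose codimension-one cones are exactly the $\sigma_{\FF^-}$, and that $\e_F$ is primitive modulo the lattice of $\mathrm{span}(\sigma_{\FF^-})$---are standard and you correctly identify them as the bookkeeping to be checked.
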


If $\Sigma_1$ and $\Sigma_2$ are tropical fans of complementary dimensions, then $\Sigma_1$ and $v+\Sigma_2$ intersect transversally at a finite set of points for any sufficiently generic vector $v \in \R^n$. Furthermore, each intersection point $p$ is equipped with a multiplicity $w(p)$ that depends on the respective intersecting cones, in such a way that the quantity
\[
\deg (\Sigma_1 \cdot \Sigma_2) := \sum_{p \in \Sigma_1 \cap (v+\Sigma_2)} w(p)
\]
is constant for generic $v$ \cite[Proposition 4.3.3, 4.3.6]{mikhalkin2010tropical};
this is called the \emph{degree} of the intersection.

In all the tropical intersections that arise in this paper, it was verified in \cite[Lemma 7.4]{agostini2021likelihood} that the multiplicity index $w(p)$ is 1. This also follows readily from the fact that every such intersection comes from an arboreal pair $\splamb$, $\spmu$ by \Cref{lm:arboreal}, as explained in the next section. 
Therefore the degree of the intersection will be simply the number of intersection points:
\[
\deg (\Sigma_{(M,e)} \cdot - \Sigma_{(M/e)^\perp}) = | \Sigma_{(M,e)} \cap  (v - \Sigma_{(M/e)^\perp})|
\]
for generic $v \in \R^{E-e}$. This explains the equivalence of the two versions of \Cref{problem} and \Cref{thm:main}.

%

\section{\textsf{Proof of the main theorem via basis activities \label{sec:3}}}

Let $M$ be a matroid on $[0,n]$ of rank $r+1$ such that $0$ is not a loop nor a coloop. Then $M/0$ has rank $r$, and $N=(M/0)^\perp$ has rank $n-r$.
For any basis $B$ of $M$ containing $0$, $B^\perp = [0,n]-B $ is a basis of $N  = (M/0)^\perp$. Conversely, every basis of $N$ equals $B^\perp$ for a basis $B$ of $M$ containing $0$.

Let us construct an intersection point in  $\Sigma_{(M,0)} \cap (\w - \Sigma_N)$ for each $\beta$-nbc-basis  of $M$.

\begin{lm}\label{lm:bnbc_intersect}
Let $M$ be a matroid on $E=[0, n]$ of rank $r+1$ such that $0$ is not a coloop, and let $N = (M/0)^{\perp}$. Let $\w \in \R^{n}$ be rapidly increasing. 
For any $\beta$-nbc-basis $B$ of $M$, 
there exist 
unique vectors $(0,\x) \in \sigma_{\FF_M(B)}$ and $\y \in \sigma_{\FF_N(B^\perp)}$ such that $\x + \y = \w$.
\end{lm}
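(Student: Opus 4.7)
The plan is to pin down the unique intersection point via a tree structure on the intersection graph $\Gamma_{\splamb_B, \spmu_{B^\perp}}$, and then use the rapid-increase hypothesis to verify the cone inequalities.

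\textbf{Setup.} First I would set $\splamb = \splamb_B$ with blocks $\lambda_i = F_i^M \setminus F_{i-1}^M$ and $\spmu = \spmu_{B^\perp}$ with blocks $\mu_j = G_j^N \setminus G_{j-1}^N$. The cones $\sigma_{\FF_M(B)}, \sigma_{\FF_N(B^\perp)}$ sit inside $\L(\splamb), \L(\spmu)$ respectively, and since $0 \in \lambda_{r+1}$, this block plays the role of $\splamb(0)$. I would then unpack the $\beta$-nbc condition on $\tilde B = B^\perp \cup \{0\} \setminus \{1\}$ into two parallel minimality statements using \Cref{lm:nbc_flats}: $b_i = \min \lambda_i$ for every $i$, and $b^\perp_j = \min \mu_j$ for every $j$. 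The latter uses the identification $G_j^N = \tilde F_j^{M^\perp}$ as subsets of $[1,n]$ for $j \leq n-r-1$, with the boundary case $b^\perp_{n-r} = 1$ covered by the requirement $1 \in B^\perp$ that $\beta$-nbc implicitly imposes. This translation step is likely the main technical nuisance; once in hand, the remaining arguments go through smoothly.

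\textbf{Tree structure.} Next I would define a ``parent edge'' for each non-root vertex of $\Gamma := \Gamma_{\splamb,\spmu}$: the parent edge of $\lambda_i$ (for $i \leq r$) is the edge labelled $b_i$, and the parent edge of $\mu_j$ is $b^\perp_j$. These $n$ distinct labels form $(B \setminus \{0\}) \cup B^\perp = [1,n]$, exhausting the edges of $\Gamma$. The key claim is that along any orbit of the parent map the edge labels strictly decrease: if $\lambda_{i'}$'s parent is $\mu_{j'}$ via $b_{i'}$, then $b_{i'} \in \mu_{j'}$ while $b^\perp_{j'} = \min \mu_{j'}$, and since $b_{i'} \in B$ and $b^\perp_{j'} \in B^\perp$ are distinct, $b^\perp_{j'} < b_{i'}$; the symmetric transition $\mu_{j'} \to \lambda_{i''}$ gives $b_{i''} < b^\perp_{j'}$. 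The strict decrease rules out cycles, so each orbit terminates at the root $\lambda_{r+1}$, and $\Gamma$ is a tree.

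\textbf{Cone inequalities.} Finally, \Cref{lm:arboreal} and \Cref{lm:x,y} produce the unique $\x \in \L(\splamb), \y \in \L(\spmu)$ with $x_0 = 0$ and $\x + \y = \w$, each coordinate an alternating signed sum along the tree path to the root. On the path from $\lambda_i$ to the root, the first edge $b_i$ is the largest label (by the strict-decrease claim) and enters with positive sign, so $x_{\lambda_i} = w_{b_i} + \sum_{k < b_i} \epsilon_k w_k$ with $\epsilon_k \in \{-1, 0, 1\}$. \Cref{lm:omega} then chains $x_{\lambda_1} > \cdots > x_{\lambda_r} > 0 = x_{\lambda_{r+1}}$ from $b_1 > \cdots > b_r > 0$, which are exactly the inequalities defining $\sigma_{\FF_M(B)} \cap \{x_0 = 0\}$. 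The symmetric argument on $\spmu$ using $b^\perp_1 > \cdots > b^\perp_{n-r}$ places $\y$ in $\sigma_{\FF_N(B^\perp)}$, and uniqueness follows from \Cref{lm:arboreal}.
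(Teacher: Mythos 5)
Your proposal is correct and follows essentially the same route as the paper: establish that $\splamb_B$ and $\spmu_{B^\perp}$ form an arboreal pair using the nbc minimality conditions $b_i=\min\lambda_i$, $b^\perp_j=\min\mu_j$ (together with $B\cap B^\perp=\emptyset$), then apply \Cref{lm:arboreal} and \Cref{lm:x,y} and use the strictly decreasing edge labels along paths to the root to get $x_{\lambda_i}\approx w_{b_i}$, $y_{\mu_j}\approx w_{b^\perp_j}$ and hence the cone inequalities from \Cref{lm:omega}. Your ``parent edge'' map is a mild repackaging of the paper's argument (which proves connectivity by contradiction via the smallest edge of a stray component and separately orients edges toward the block with smaller minimum), and both treatments leave the same translation step --- that $\beta$-nbc for $B$ in $M$ yields the nbc property of $B^\perp$ in $N$ --- at the same level of detail.
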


\begin{proof}
A flag $\{\emptyset \subsetneq F_1 \subsetneq \cdots \subsetneq F_k \subsetneq E\}$ of subsets of $E$ gives rise to a set partition $\{F_1, F_2-F_1, \ldots, E-F_k\}$ of $E$. First we show that the set partitions $\opi$ and $\opi^\perp$ corresponding to the flags $\FF=\FF_M(B)$ and $\FF^\perp = \FF_N(B^\perp)$ 
form an arboreal pair. Since they have sizes $|B| = r+1$ and $|B^\perp| = n-r$, respectively, their intersection graph has $n+1$ vertices and $n$ edges. Therefore it is sufficient to prove that the intersection graph $\Gamma_{\opi, \opi^\perp}$ is connected; this implies that it is a tree.

Assume contrariwise, and let $A$ be a connected component not containing the edge $1$. Let $a>1$ be the smallest edge in $A$. Then $a$ is the smallest element of its part $\opi(a)$ in $\opi$, and since $B$ is nbc-basis in $M$, this implies $a \in B$. Similarly, 
since $B^\perp$ is nbc-basis in $N$, this also implies $a \in B^\perp$. This  contradicts Lemma \ref{lm:nbc_flats}.

It follows from Lemma \ref{lm:arboreal} that there exist unique $(0,\x) \in \L(\sppi)$ and $\y \in \L(\sppi^\perp)$ such that $\x + \y = \w$. It remains to show that $(0,\x) \in \sigma_\FF$ and $\y \in \sigma_{\FF^\perp}$.

Lemma \ref{lm:x,y} provides formulas for $\x$ and $\y$ in terms of the paths from the various vertices of the tree of $\Gamma_{\sppi,\sppi^\perp}$ to $\pi(0)$. To understand those paths, let us give each edge $e$ an orientation as follows:
\begin{eqnarray*}
\pi(e) \longrightarrow \pi^\perp(e) & \text{ if } &  \min \pi(e) > \min \pi^\perp(e), \\
\pi(e) \longleftarrow \pi^\perp(e) & \text{ if } & \min \pi(e) < \min \pi^\perp(e).
\end{eqnarray*}
We never have $\min \pi(e) = \min \pi^\perp(e)$, because as above, that would imply $e \in B \cap B^\perp$. 

We claim that every vertex other than $\pi(0)$ has an outgoing edge under this orientation. 
Consider a part $\pi_i \neq \pi(0)$ of $\opi$; let $ \min \pi_i = b$. Edge $b$ connects $\pi_i = \pi(b)$ to $\pi^{\perp}(b) \ni b$, and we cannot have $\min \pi^{\perp}(b) > b = \min \pi(b)$, so we must have $\pi_i \rightarrow \pi^{\perp}(b)$. The same argument works for any part $\pi^{\perp}_j$ of $\opi^{\perp}$.

Now, since $B$ is an nbc-basis of $M$, every element $b \in B$ is minimum in $\pi(b)$, so there is a directed path that starts at $\pi(b)$ and can only end at $\pi(0)$, and its first edge is $b$. Furthermore, by the definition of the orientation, the labels of the edges decrease along this path. 
Thus in the alternating sum $x_b = w_b \pm \cdots$ given by Lemma \ref{lm:x,y}, the first term dominates, and $x_b \approx w_b$  for $b \in B \setminus 0$, whereas $x_0=0$. Similarly, since $B^\perp$ is an nbc-basis of $N$, $y_c \approx w_c$ for all $c \in B^\perp$. 

Therefore, if we write $B = \{b_1 > \dots > b_r > b_{r+1}=0\}$, since $w$ is rapidly increasing, it follows that $x_{b_1} > x_{b_2} > \cdots > x_{b_r} > x_{b_{r+1}} = 0$, so from \Cref{thm:bergman} we have $(0,\x) \in \sigma_\FF$.
Similarly, if we write $B^\perp = E-B = \{c_1 > \dots > c_{n-r}=1\}$, then $y_{c_1} > y_{c_2} > \cdots > y_{c_{n-r}}$, so $\y \in \sigma_{\FF^\perp}$. The desired result follows.
\end{proof}

\begin{smpl}\label{smpl:arboreal}
The graphical matroid $M$ of the graph $G$ in \Cref{./img:dualmatroid} has six $\beta$-nbc-bases: $0256$, $0257$, $0259$, $0368$, $0378$, $0379$. Let us compute the intersection point in $\Sigma_{(M,0)} \cap (\w - \Sigma_N)$ associated to $0257$ for the rapidly increasing vector $\w=(10^0, 10^1, \ldots, 10^8) \in \R^9$. 

For $B = 0257$, we have $B^\perp = 134689$. The flags they generate in $M$ and $N$ are
\begin{eqnarray*}
\FF_M(B) &=& \{\emptyset \subsetneq 7  \subsetneq 57  \subsetneq 2457  \subsetneq 0123456789 \} \\
\FF_{N}(B^\perp) &=& \{\emptyset \subsetneq 9\subsetneq 89 \subsetneq 689 \subsetneq 46789 \subsetneq 346789 \subsetneq 123456789 \},
\end{eqnarray*}
which give rise to the corresponding set compositions 
\[
\opi = 7|5|24|013689, \qquad \opi^\perp = 9|8|6|47|3|125.
\]
This is indeed an arboreal pair, as evidenced by their intersection graph in \Cref{./img:arboreal1}.

 \begin{figure}[h]
 \begin{center}
 \includegraphics[scale=1.3]{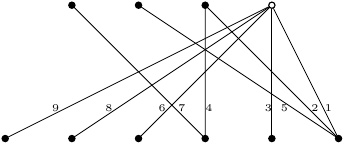}
 \end{center}
 \caption{The intersection graph of $\opi = 7|5|24|13689$ and $\opi^\perp = 9|8|6|47|3|125$.\label{./img:arboreal1}}
 \end{figure}
 
\Cref{lm:bnbc_intersect} gives us the unique points $(0,\x) \in \FF_{\opi}$ and $\y \in \FF_{\otau}$ such that $\x + \y = \w$; they are given by the paths to the special vertex $\pi(0)$  in the intersection tree $\Gamma_{\opi, \opi^\perp}$. For example $x_7 = 10^6-10^3+10^1-10^0 = 999009$ and $y_4 = 10^3-10^1+10^0 = 991$
are given by the paths 7421 and 421 from $\pi(7) = \pi_1$ and $\pi^\perp(7) = \pi^\perp_4$ to $\pi(0)$, respectively. In this way we obtain:
\begin{equation*}
\begin{array}{rrrrrrrrrr}
\x =& 0& 9 & 0 & 9 & 9999 & 0 & 999009 & 0 & 0 \\
\y =& 1& 1& 100& 991& 1& 100000& 991& 10000000& 100000000 \\
\w =& 1& 10& 100& 1000& 10000& 100000& 1000000& 10000000& 100000000
\end{array}
\end{equation*}
and $\x \in \Sigma_{(M,0)} \cap (\w - \Sigma_N)$. We invite the reader to record the weights $(0,\x)$ and $\y$ in the graphs $G$ and $H$ of \Cref{./img:dualmatroid}, and verify that in each cycle the minimum weight appears at least twice.
\end{smpl}

Conversely, the following lemma shows that any intersection point between $\Sigma_{(E,e)}$ and $v - \Sigma_N$ is of the form constructed in \Cref{lm:bnbc_intersect}; that is, it comes from a $\beta$-nbc-basis.

\begin{lm}\label{lm:intersections_are_bnbc}
Let $M$ be a matroid on $E=[0, n]$ of rank $r+1$, such that $0$ is not a loop nor a coloop, and $N = (M/0)^{\perp}$. Let $\w \in \R^n$ be generic and rapidly increasing. Let
\begin{eqnarray*}
\FF &=& \{\emptyset = F_0 \subsetneq F_1 \subsetneq \dots \subsetneq F_r \subsetneq F_{r+1} = E \} \\
\GG &=& \{\emptyset = G_0 \subsetneq G_1 \subsetneq \dots \subsetneq G_{n-r-1} \subsetneq G_{n -r} = E - 0 \}
\end{eqnarray*} 
be complete flags of the matroids $M$ and $N$, respectively, such that 
$\Sigma_{(M,0)}$ and $\w - \Sigma_N$ intersect at $\sigma_{\FF}$ and $\w - \sigma_{\GG}$. Then there exists a $\beta$-nbc-basis $B$ of $M$ such that $\FF = \FF_M(B)$ and $\mathcal G = \FF_N(B^\perp)$.
\end{lm}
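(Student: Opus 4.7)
Following the construction in Lemma~\ref{lm:bnbc_intersect}, let $\opi$ and $\otau$ be the set partitions of the flags $\FF$ and $\GG$, so the intersection point lies in $\L(\opi)|_{x_0=0} \cap (\w - \L(\otau))$. By Lemma~\ref{lm:arboreal} this intersection is a single point, and $\Gamma_{\opi,\otau}$ is a tree. I would take the natural candidates $B := \{\min \pi_i : 1 \leq i \leq r+1\}$ and $C := \{\min \tau_j : 1 \leq j \leq n-r\}$: each is a transversal of its partition, hence a basis of $M$ and of $N$ respectively, inducing the given flag. Since $|B|+|C| = (r+1)+(n-r) = |E|$, proving $B \cap C = \emptyset$ will force $C = E \setminus B = B^\perp$, after which it only remains to verify the nbc conditions for $B$ and $B^\perp$.

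Granting $B \cap C = \emptyset$, I would mirror the orientation argument from the proof of Lemma~\ref{lm:bnbc_intersect}. Orient each edge $e$ of $\Gamma_{\opi,\otau}$ by $\pi(e) \to \tau(e)$ if $\min \pi(e) > \min \tau(e)$ and the reverse otherwise; disjointness rules out ties. For each non-root vertex $v$ the edge labeled $\min v$ is outgoing, and a counting argument ($n$ edges against $n$ non-root vertices) then forces this to be the \emph{unique} outgoing edge at $v$, which must lie on the path from $v$ to the root $\pi(0)$. The labels along that path strictly decrease, so by Lemmas~\ref{lm:x,y} and~\ref{lm:omega} we obtain $x_{\pi_i} \approx w_{\min \pi_i}$ for every non-root $\pi_i$ and $y_{\tau_j} \approx w_{\min \tau_j}$ for every $\tau_j$.

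The interior constraints $x_{\pi_1} > \cdots > x_{\pi_{r+1}}$ and $y_{\tau_1} > \cdots > y_{\tau_{n-r}}$ now translate, via $\w$ being rapidly increasing, into strictly decreasing sequences $(\min \pi_i)_i$ and $(\min \tau_j)_j$; moreover the single zero $x$-value, located at $\pi(0)$, must be the smallest in the flag order, simultaneously forcing $\pi(0) = \pi_{r+1}$. Lemma~\ref{lm:nbc_flats} then yields that $B$ is an nbc basis of $M$ with $\FF_M(B)=\FF$, and $B^\perp=C$ is an nbc basis of $N$ with $\FF_N(B^\perp)=\GG$, so $B$ is $\beta$-nbc as required.

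The step I expect to be the main obstacle is the disjointness $B \cap C = \emptyset$. I would argue by contradiction: if $e^* = \min \pi_{i^*} = \min \tau_{j^*}$ lies in $B \cap C$, then I would apply Lemma~\ref{lm:unionflat_coflat} to $F_{i^*-1}$ (a flat of $M$) and a suitable lift of $G_{j^*-1}$ to a flat of $M^\perp$ (either $G_{j^*-1}$ itself or $G_{j^*-1} \cup \{0\}$). The minimality of $e^*$ in both blocks, combined with the tree structure of $\Gamma_{\opi,\otau}$, should pin down a union of size exactly $|E|-1$, contradicting Lemma~\ref{lm:unionflat_coflat}. The subtlety is choosing the right flats: naively $F_{i^*-1} \cup G_{j^*-1}$ can miss more than just $e^*$ (for instance any element of $\pi_{i^*} \cap \tau_l$ for $l \neq j^*$), so extracting the correct invariant of size $|E|-1$ from the arboreal structure requires careful bookkeeping.
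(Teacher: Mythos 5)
Your reduction of the problem to the disjointness $B \cap C = \emptyset$ is sound, and the orientation-plus-counting argument you run \emph{after} granting disjointness is a correct (and clean) reorganization of ideas that do appear in the paper. But the disjointness step is exactly where your proposal stops being a proof, and you have correctly diagnosed why your own suggested fix fails: applying \Cref{lm:unionflat_coflat} to $F_{i^*-1}$ and a lift of $G_{j^*-1}$ gives a union whose complement is $(E - F_{i^*-1}) \cap (E - G_{j^*-1}^\bullet)$, which contains every element of $\pi_l \cap \tau_m$ with $l \geq i^*$, $m \geq j^*$, and there is no reason this should have size $1$. No choice of a single flat/coflat pair indexed by the putative common element will pin down a set of size $|E|-1$, so this route does not close.

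The paper's proof avoids making disjointness the first step. It applies \Cref{lm:unionflat_coflat} only at the \emph{top} of the two flags — to $F_r$ against the coflats $G_j^\bullet = \cl_{M^\perp}(G_j)$, and to $G_{n-r-1}^\bullet$ against the $F_i$ — where the arboreal condition ($|\pi_a \cap \tau_b| \leq 1$) does force the relevant complement to be a singleton. This locates $0 \in \pi_{r+1}$, shows $\pi_{r+1} \cap \tau_{n-r} = \{1\}$, and (item c) shows $0 \notin G_j^\bullet$ for $j < n-r$. It then proves, with no disjointness hypothesis, that every path in the tree ending with edge $1$ has its largest label first: a minimal counterexample $e < f > f_2 > \cdots$ would give $x_e \approx -w_f < 0 = x_1$ (or the analogous $y$ statement) via \Cref{lm:x,y}, contradicting membership in $\sigma_\FF$ (resp.\ $\sigma_\GG$). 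Only then does disjointness fall out, purely from the tree structure: if $b_i = c_j = e$, the edge $e$ would have to be the first edge of the path to edge $1$ from \emph{both} of its endpoints, which is impossible in a tree. You should also note a second, smaller gap at the end: being $\beta$-nbc requires $B^\perp \cup 0 \setminus 1$ to be nbc in $M^\perp$, not merely $B^\perp$ nbc in $N$; passing from the latter to the former uses precisely the fact that $0 \notin \cl_{M^\perp}(G_j)$ for $j < n - r$, which again comes from the step-1 analysis you skipped.
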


\begin{proof}
If $\sigma_{\FF}$ and $\w - \sigma_{\GG}$ intersected at more than one point, their intersection would contain a line segment, so $\Sigma_{(M,0)} \cap (\w - \Sigma_N)$ would be infinite. Since $\Sigma_{(M,0)}$ and $- \Sigma_N$ have complementary dimensions, this would contradict the genericity of $\w$. 

Therefore $\sigma_{\FF} \cap (\w - \sigma_{\GG})$ is a point, and
Lemma \ref{lm:arboreal} implies that  the set compositions $\opi$ and $\otau$ of $\FF$ and $\GG$ form an arboreal pair; that is, $\Gamma_{\opi, \otau}$ is a tree. In particular, $\pi_a \cap \tau_b = (F_a - F_{a-1}) \cap (G_b - G_{b-1})$ cannot have more than one element for any $a$ and $b$. We proceed in several steps.

\medskip

1. Our first step will be to show that in the intersection tree $\Gamma_{\opi, \otau}$, the top right vertex $\pi_{r+1}$ contains $0$ and $1$, the bottom right vertex $\tau_{n-r}$ contains $1$, and thus the edge $1$ connects these two rightmost vertices.

The matroid $N = (M/0)^\perp = M^\perp - 0$ can be obtained by deleting the element $0$ from the matroid $M^\perp$.
Each $G_i$ is a flat of $N$, so $G_i^{\bullet} \coloneqq \cl_{M^{\perp}}(G_i) \in \{G_i, G_i \cup 0\}$ is a flat of $M^\perp$. Consider the flag of flats of $M^\perp$
\[ 
\GG^{\bullet} \coloneqq \{ \emptyset = G_0^{\bullet} \subsetneq G_1^{\bullet} \subsetneq \dots \subsetneq G_{n-r-1}^{\bullet} \subsetneq G_{n-r}^{\bullet} = E\},  
\]
where $G_{n-r}^{\bullet} = E$ because $0$ is not a coloop of $M^\perp$ and $G_0^{\bullet} = \emptyset$ because $0$ is not a loop of $M^\perp$. 
Let $m$ be the minimal index such that $0 \in G_m^\bullet$, so
\[ 
\GG^{\bullet} \coloneqq \{ \emptyset = G_0 \subsetneq G_1 \subsetneq \dots \subsetneq G_{m-1} \subsetneq G_m \cup 0 \subsetneq \dots \subsetneq G_{n-r-1} \cup 0 \subsetneq G_{n-r} \cup 0 = E\},  
\]

Consider the unions of the flat $F_r$ with the coflats in $\GG^\bullet$; let $j$ be the index such that
\[
F_r \cup G_{j-1}^{\bullet} \neq E, \qquad F_r \cup G_j^{\bullet} = E
\]
The former cannot have size $|E|-1$ because it is the union of a flat and a coflat. Therefore
\begin{equation}\label{eq1}
(F_r \cup G_j^{\bullet}) - (F_r \cup G_{j-1}^{\bullet}) = (E-F_r) \cap (G_j^{\bullet} - G_{j-1}^{\bullet}) \text{ has size at least $2$.}
\end{equation}
But $\FF$ and $\GG$ are arboreal so 
\begin{equation}\label{eq2}
\pi_{r+1} \cap \tau_j = (E-F_r) \cap (G_j - G_{j-1}) \text{ has size at most $1$}.
\end{equation}
Now observe that $G_j^\bullet-G_{j-1}^\bullet$ and $G_j-G_{j-1}$ can only differ by $\{0\}$, so \eqref{eq1} and \eqref{eq2} imply that they \textbf{must} differ by $\{0\}$; furthermore, the differing element 0 must be in $E-F_r$. We conclude: 

\smallskip

a) $G_j^{\bullet} = G_j \cup 0$ and $G_{j-1}^{\bullet} = G_{j-1}$, that is, $j=m$.
\smallskip

b) $0 \in E-F_r = \pi_{r+1}$.
%
%
%
%
%
%
%
%
%
%
%
\smallskip

Similarly, consider the union of the coflat $G_{n-r-1}^{\bullet}$ with the flats in $\FF$; let $i$ be the unique index such that
\[
F_{i-1} \cup G_{n-r-1}^{\bullet} \neq E, \qquad F_i \cup G_{n-r-1}^{\bullet} = E.
\]
An analogous argument shows that $(F_i - F_{i-1}) \cap (E-G_{n-r-1}^{\bullet})$ has size at least 2, whereas $\pi_i \cap \tau_{n-r} = (F_i - F_{i-1}) \cap (E-0-G_{n-r-1})$ has size at most $1$. This has three consequences:
\smallskip

c) $G_{n-r-1}^\bullet = G_{n-r-1} $, that is, $m=n-r$. 
\smallskip
 
d) $0 \in F_i - F_{i-1}$, which in light of b) implies that $i=r+1$.
\smallskip

e) $(F_i - F_{i-1}) \cap (E-0-G_{n-r-1}) = \pi_{r+1} \cap \tau_{n-r} = \{e\}$ for some element $e \in E-0$. But $e \in \pi_{r+1}$ means that $x_e=0$ is minimum among all $x_i$s for any  $(0,\x) \in \sigma_\FF$, and $e \in \tau_{n-r}$ means that $y_e$ is minimum among all $y_i$s for any  $\y \in \sigma_\GG$ by \Cref{thm:bergman}. Since $\w=\x+\y$ for some such $\x$ and $\y$,  $w_e=x_e+y_e$ is minimum among all $w_i$s, and since $\w$ is rapidly increasing, $e=1$.

\smallskip

It follows that in the intersection tree $\Gamma_{\opi, \otau}$, the top right vertex $\pi_{r+1}$ contains $0$ and $1$ by d) and e), the bottom right vertex $\tau_{n-r}$ contains $1$ by e), and thus $1$ connects them.

\medskip

2.
Next we claim that for any path in the tree $\Gamma_{\opi, \otau}$ directed towards and ending at edge $1$, the first edge has the largest label.\footnote{This implies that the edge labels decrease along any such path, but we will not use this in the proof.}
Assume contrariwise, and consider a containment-minimal path $P$ that does not satisfy this property; its edges must have labels satisfying $e<f>f_2 > \cdots > f_k$ sequentially.
If edge $e$ goes from $\pi(e)$ to $\tau(e)$, Lemma \ref{lm:x,y} gives $x_e = w_e-w_f \pm \text{(terms smaller than $w_f$)} \approx -w_f < 0 = x_1$, contradicting that $(0,\x) \in \sigma_\FF$. If $e$ goes from $\tau(e)$ to $\pi(e)$, we get $y_e = w_e-w_f \pm \text{(terms smaller than $w_f$)} \approx -w_f < w_1 = y_1$, contradicting that $\y \in \sigma_\GG$.

\medskip

3. Now define
\begin{eqnarray*}
b_i \coloneqq \min (F_i - F_{i-1}) &\text{ for }& i=1, \dots, r+1, \\
c_j \coloneqq \min (G_j - G_{j-1}) &\text{ for }& j = 1, \ldots, n-r.
\end{eqnarray*}
Then $B \coloneqq \{b_1, \ldots, b_{r+1}\}$ and $C \coloneqq \{c_1, \ldots, c_{n-r}\}$ are bases of $M$ and $N$, and $\FF = \FF_M(B)$ and $\GG = \FF_N(C)$. We will show that $B$ is an $\beta$-nbc-basis and $C=B^\perp$. 

\smallskip

To do so, we first notice that the path from vertex $\pi_i = F_i - F_{i-1}$ (resp. $\tau_j = G_j - G_{j-1}$) to edge $1$ must start with edge $b_i$ (resp. $c_j$). Indeed, if it started with some other (necessarily larger) edge $b' \in F_i - F_{i-1}$, then the path from edge $b_i$ to edge $1$ would include edge $b'$, and hence would not start with the largest edge, contradicting 2.
This has two consequences:

\smallskip
f) The sets $B$ and $C$ are disjoint. If we had $b_i=c_j=e$, then edge $e$, which connects vertices $\pi_i = F_i - F_{i-1}$ and $\tau_j = G_j - G_{j-1}$, would have to be the first edge in the paths from both of these vertices to edge $1$; this is impossible in a tree. We conclude that $B$ and $C$ are disjoint. Since 
$|B|=r+1$ and $|C|=n-r$, we have $C = B^\perp$.

\smallskip

g) For each $i$ we have $x_{b_i} \approx w_{b_i}$, because the path from $\tau_i$ to vertex $0$ -- which is the path from $\tau_i$ to edge $1$, with edge $1$ possibly removed -- starts with the largest edge $b_i$, so  Lemma \ref{lm:x,y} gives $x_{b_i} = w_{b_i} \pm \text{(smaller terms)}  \approx w_{b_i}$. Similarly $y_{c_i} \approx w_{c_i}$. Now, $(0, \x) \in \sigma_\FF$ gives $x_{b_1}>\cdots>x_{b_{r+1}}$, which implies $w_{b_1}>\cdots>w_{b_{r+1}}$, which in turn gives
\[
b_1> \cdots > b_r>b_{r+1}; \qquad  \textrm{ and analogously, } \qquad 
c_1 > \cdots > c_{n-r-1}>c_{n-r}=1.
\]
The former implies that $B$ is nbc-basis in $M$ by \Cref{lm:nbc_flats}. The latter, combined with c), implies that  $c_1 > \cdots > c_{n-r-1}>0$ respectively are the minimum elements of $G^\bullet_1, \ldots, G^\bullet_{n-r-1}, G^\bullet_{n-r}=E$ that they sequentially generate, so $C\cup0 \setminus 1 = B^\perp \cup 0 \setminus 1$ is nbc-basis in $M^\perp$. It follows that $B$ is $\beta$-nbc-basis in $M$.

\smallskip

We conclude that $B$ is $\beta$-nbc-basis in $M$, $\FF = \FF_M(B)$, and $\GG = \FF_N(B^\perp)$, as desired. 
\end{proof}


\begin{proof}[Proof of \Cref{thm:main}.1]
This follows by combining the previous two lemmas.
\end{proof}

%

%

\section{\textsf{Proof of the main theorem via torus-equivariant geometry} \label{sec:4}}

In this section we give a proof of \Cref{thm:main}.2 using the framework of \emph{tautological classes} of matroids of Berget, Eur, Spink, and Tseng.  See \cite{BEST} for details on what follows.
Recall that $M$ is a matroid on $E$ of rank $r+1$.

\medskip
In this framework, one works with the Chow ring of the permutohedral fan $\Sigma_E$, which is the Bergman fan of the \emph{Boolean matroid} on $E$ whose only basis is $E$.
Its lattice of flats is the poset of subsets of $E$, and its set of maximal cones is in bijection with the set $\mathfrak S_E$ of permutations of $E$.
Let $S = \Z[t_i: i\in E]$; we can think of it as the ring of polynomials on $\R^E$ with integer coefficients. Then $S^{\mathfrak S_E}$ is the ring of $|E|!$-tuples of polynomials in $S$, one polynomial $f_\sigma$ for each permutation $\sigma$ of $E$, or equivalently, one polynomial $f_\sigma$ for each chamber $\sigma$ of $\Sigma_E$.\footnote{We caution that $S^{\mathfrak S_E}$ does \emph{not} denote the ring of $\mathfrak S_E$-invariants of $S$, despite notational similarity.}
We are interested in the $|E|!$-tuples for which the function $f: \R^E \rightarrow \R$ given by $f(x)=f_\sigma(x)$ for $x \in \sigma$ is well defined.

The \emph{Chow ring} $A^\bullet(\Sigma_E)$ of $\Sigma_E$ has the following description.

\begin{defin}\label{defin:chow}
Let $A^\bullet_T(\Sigma_E)$ be the subring of 
$S^{\mathfrak S_E}$
defined by
\begin{eqnarray*}
A_T^\bullet(\Sigma_E) &=& \left\{\text{continuous piecewise polynomials with integer coefficients supported on } \Sigma_E \right\} \\
 &=& \left\{ (f_\sigma)_{\sigma \in \mathfrak S_E} \in S^{\mathfrak S_E} \ \middle| \ \begin{matrix}\text{for any $\sigma, \sigma'\in \mathfrak S_E$, the polynomials $f_\sigma$ and $f_{\sigma'}$}\\ \text{agree as functions on  $\sigma \cap \sigma' \subseteq \R^E$}\end{matrix}\right\}. 
\end{eqnarray*}
Let $I$ 
be the ideal of $ A_T^\bullet(\Sigma_E)$ generated by the global linear functions. Then
\[
A^\bullet(\Sigma_E) = A_T^\bullet(\Sigma_E)/I.
\]
\end{defin}

One can associate to the fans $\Sigma_{(M,e)}$ and $-\Sigma_{(M/e)^\perp}$ certain elements $[\Sigma_{(M,e)}]$ and $[-\Sigma_{(M/e)^\perp}]$ of $A^\bullet(\Sigma_E)$ as follows.
First, per \Cref{rem:proj}, the fan $\Sigma_E$ in $\R^E$ has lineality space $\mathbbm 1 \R$, and the quotient fan $\Sigma_E/\mathbbm 1 \R$ has a natural unimodular isomorphirm to the \emph{affine braid fan} $\Sigma_{E,e} = \Sigma_E|_{x_e=0}$ in $\R^{E-e}$, whose $|E|!$ chambers correspond to the possible orders of $\{x_f \, : \,  f \in E-e\} \cup \{0\}$. This is the affine Bergman fan of the Boolean matroid with special element $e$.

Then, the fans $\Sigma_{(M,e)}$ and $-\Sigma_{(M/e)^\perp}$ are subfans of $\Sigma_{E,e}$, and they are tropical fans in the sense that they satisfy the balancing condition (see for instance \cite[Definition 5.1]{AHK18}). Via the theory of Minkowski weights \cite{FS97}, they consequently define elements $[\Sigma_{(M,e)}]$ and $[-\Sigma_{(M/e)^\perp}]$ of the Chow ring $A^\bullet(\Sigma_{E,e}) \cong A^\bullet(\Sigma_E)$. 
Moreover, the ring $A^\bullet(\Sigma_E)$ is equipped with a degree map $\deg: A^\bullet(\Sigma_E) \to \Z$, which agrees with the map $\deg$ in \Cref{thm:main} in the sense that  
\begin{equation}\label{eq:degisdeg}
\deg(\Sigma_{(M,e)} \cap -\Sigma_{(M/e)^\perp}) = \deg_{\Sigma_E}([\Sigma_{(M,e)}] \cdot [-\Sigma_{(M/e)^\perp}]).
\end{equation}
For a survey of these facts, see \cite[Section 4]{Huh18b}, \cite[Section 5]{AHK18}, or \cite[Section 7.1]{BEST}.

\medskip
We now describe how \cite{BEST} provided a distinguished representative in $A_T^\bullet(\Sigma_E)$ of the class $[\Sigma_{(M,e)}] \in A^\bullet(\Sigma_E) = A_T^\bullet(\Sigma_E)/I$, and similarly for the class $[-\Sigma_{(M/e)^\perp}]$.
For a matroid $M$ on $E$, consider the following elements of the rings $A_T^\bullet(\Sigma_E)$ and $A^\bullet(\Sigma_E)$, modeled after the geometry of torus-equivariant vector bundles from realizable matroids.
For each permutation $\sigma\in \mathfrak S_E$, let $B_\sigma(M)$ be the lexicographically first basis of $M$ with respect to the ordering $\sigma(1) < \cdots < \sigma(n)$ of the ground set.

\begin{defin}\cite[Definition 3.9]{BEST}
Let $M$ be a matroid of rank $r+1$ on a ground set $E$ of size $n+1$.
Its \emph{torus-equivariant tautological Chern classes} are the elements $\{c_i^T(\mathcal S_M^\vee)\}_{i=0, \ldots, r+1}$ and $\{c_j^T(\mathcal Q_M)\}_{j = 0, \ldots, n-r}$ in $A_T^\bullet(\Sigma_E)$ defined by
\begin{align*}
c_i^T(\mathcal S_M^\vee)_\sigma &= \text{the $i$-th elementary symmetric polynomial in $\{t_k : k\in B_\sigma(M)\}$} \quad\text{and}\\
c_j^T(\mathcal Q_M)_\sigma &= \text{the $j$-th elementary symmetric polynomial in $\{-t_\ell: \ell\in E\setminus B_\sigma(M)\}$}
\end{align*}
for any permutation $\sigma\in \mathfrak S_E$.  Their images in the quotient $A^\bullet(\Sigma_E)$, denoted $c_i(\mathcal S_M^\vee)$ and $c_j(\mathcal Q_M)$, are called the \emph{tautological Chern classes} of $M$.
\end{defin}

\cite[Proposition 3.8]{BEST} shows that these elements are well-defined.
The results of \cite{BEST} yield the following representatives in $A_T^\bullet(\Sigma_E)$ of the elements $[\Sigma_{(M,e)}]$ and $[-\Sigma_{(M/e)^\perp}]\in A^\bullet(\Sigma_E)$.
Let $M/e \oplus U_{0,e}$ be the matroid on $E$ obtained from $M/e$ by adding back the element $e$ as a loop. This matroid has rank $r$.

\begin{lm}\label{lem:classes}
Let $M$ be a matroid of rank $r+1$ on a ground set $E$ of size $n+1$.
Define elements $[\Sigma_{(M,e)}]^T$ and $[-\Sigma_{(M/e)^\perp}]^T$ in $A_T^\bullet(\Sigma_E)$ by $[\Sigma_{(M,e)}]^T = c_{n-r}^T(\mathcal Q_M)$ and $[-\Sigma_{(M/e)^\perp}]^T = c_{r}^T(\mathcal S^\vee_{M/e \, \oplus \, U_{0,e}})$, or explicitly,
\[
[\Sigma_{(M,e)}]^T_\sigma = \prod_{i \in E\setminus B_\sigma(M)} (-t_i) \quad\text{and}\quad 
[-\Sigma_{(M/e)^\perp}]^T_\sigma = {\prod_{i\in B_\sigma(M/e \, \oplus \, U_{0,e})}} t_i \quad\text{for all $\sigma\in \mathfrak S_E$}.
\]
Then, their images in the quotient $A^\bullet(\Sigma_E)$ are exactly $[\Sigma_{(M,e)}]$ and $[-\Sigma_{(M/e)^\perp}]$, respectively.
\end{lm}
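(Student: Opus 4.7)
The approach is to unpack the lemma as a direct consequence of the BEST framework, which identifies Bergman-fan classes with top tautological Chern classes. I would proceed in three main steps.

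\medskip

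First, I would verify that the candidates $[\Sigma_{(M,e)}]^T$ and $[-\Sigma_{(M/e)^\perp}]^T$ of the statement are well-defined elements of $A_T^\bullet(\Sigma_E)$. By construction they are the top torus-equivariant Chern classes $c_{n-r}^T(\mathcal Q_M)$ and $c_r^T(\mathcal S^\vee_{M/e \oplus U_{0,e}})$: the top elementary symmetric polynomial in a set of $k$ variables is simply their product. Continuity across adjacent chambers of $\Sigma_E$ is then the well-definedness statement Proposition 3.8 of BEST. To identify the image of $c_{n-r}^T(\mathcal Q_M)$ in $A^\bullet(\Sigma_E)$ with $[\Sigma_{(M,e)}]$, I would invoke the main identification of BEST, namely $[\Sigma_M] = c_{n-r}(\mathcal Q_M)$ in $A^\bullet(\Sigma_E)$. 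Under the canonical isomorphism $A^\bullet(\Sigma_E) \cong A^\bullet(\Sigma_{E,e})$ obtained by quotienting by the lineality $\mathbbm{1}\R$, the affine Bergman fan $\Sigma_{(M,e)} = \Sigma_M|_{x_e=0}$ carries the same Minkowski-weight data as $\Sigma_M/\mathbbm{1}\R$, so the two classes match.

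\medskip

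For the second identity, the extra ingredient is matroid duality combined with a loop-extension. Matroid duality swaps $\mathcal Q_N$ with $\mathcal S^\vee_{N^\perp}$, so the BEST Bergman-fan formula for $N^\perp$ reads $[-\Sigma_{N^\perp}] = c_{\mathrm{rk}(N)}(\mathcal S^\vee_N)$, the sign reflecting the cone inversion implicit in duality. Applied to $N = M/e$ (with $N^\perp = (M/e)^\perp$ and $\mathrm{rk}(N) = r$), this gives the representative $c_r(\mathcal S^\vee_{M/e})$ in $A^\bullet(\Sigma_{E-e})$. Replacing $M/e$ by $M/e \oplus U_{0,e}$ lifts the class to $A^\bullet(\Sigma_E)$: since a loop belongs to no basis, $B_\sigma(M/e \oplus U_{0,e}) = B_{\sigma|_{E-e}}(M/e)$ for every $\sigma \in \mathfrak S_E$, and the equivariant class $c_r^T(\mathcal S^\vee_{M/e \oplus U_{0,e}})$ is the pullback along the natural projection $\Sigma_E \to \Sigma_{E-e}$.

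\medskip

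The main obstacle is verifying that loop-extension genuinely realizes the intended torus-equivariant representative in $A^\bullet(\Sigma_E)$, without introducing an extra factor of $t_e$ or otherwise twisting the class. Geometrically, adjoining a loop corresponds to adding a trivial summand to the quotient bundle, contributing no new Chern roots, so the top Chern class of $\mathcal S^\vee$ is unchanged; combinatorially, this reduces to the elementary observation that loops lie outside every basis, so the variable $t_e$ never appears in the product defining $c_r^T(\mathcal S^\vee_{M/e \oplus U_{0,e}})_\sigma$.
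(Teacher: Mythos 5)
Your proposal is correct and follows essentially the same route as the paper: both rest on the BEST identification of the (inverted) Bergman fan class with the top Chern class of $\mathcal Q_M$ (resp.\ $\mathcal S^\vee$), combined with the duality and direct-sum compatibilities of tautological classes to handle $(M/e)^\perp$ via the loop extension $M/e \oplus U_{0,e}$. Your explicit check that the loop $e$ lies in no basis, so that $t_e$ never enters the representative, is exactly the content the paper delegates to the cited direct-sum proposition.
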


\begin{proof}
The first equality is a restatement of \cite[Theorem 7.6]{BEST} when one notes that the choice of $e\in E$ induces an isomorphism $\R^E/\R(1,\ldots, 1) \simeq \R^{E-e}$.  The second statement also follows from that theorem when one combines it with \cite[Propositions 5.11, 5.13]{BEST}, which describe how tautological Chern classes behave with respect to matroid duality and direct sums, respectively. 
%
\end{proof}


\begin{proof}[Proof of \Cref{thm:main}.2] We begin with
\cite[Theorem 6.2]{BEST} which states that
\[
\deg_{\Sigma_E}\big( [\Sigma_{(M,e)}] \cdot c_{r}(\mathcal S_M^\vee) \big) = \beta(M).
\]
In light of \eqref{eq:degisdeg}, the desired statement $\deg(\Sigma_{(M,e)} \cap -\Sigma_{(M/e)^\perp}) = \beta(M)$ will follow once we show that $[\Sigma_{(M,e)}] \cdot \big(c_{r}(\mathcal S_M^\vee) - [-\Sigma_{(M/e)^\perp}]\big)$ = 0 in $A^\bullet(\Sigma_E)$. 

Towards this end, we look at the distinguished representative of this product in $A_T^\bullet(\Sigma_E)$, and 
show that the variable $t_e$ divides $[\Sigma_{(M,e)}]_\sigma^T \cdot \big(c^T_{r}(\mathcal S_M^\vee)_\sigma- [-\Sigma_{(M/e)^\perp}]^T_\sigma \big)$ for any $\sigma\in \mathfrak S_E$, as follows.

\begin{itemize}
\item If $e\notin B_\sigma(M)$, then $\displaystyle[\Sigma_{(M,e)}]^T_\sigma = \prod_{i \in E\setminus B_\sigma(M)} (-t_i)$ is divisible by $t_e$.
\item If $e\in B_\sigma(M)$, then $B_\sigma(M/e \oplus U_{0,e}) = B_\sigma(M) \setminus e$, and hence 
\begin{eqnarray*}
c^T_{r}(\mathcal S_M^\vee)_\sigma - [-\Sigma_{(M/e)^\perp}]^T_\sigma &=& 
\text{Elem}_{r}(\{t_k \, : \, k \in B_\sigma(M)) - \prod_{j\in B_\sigma(M)\setminus e} t_j  \\  
&=& \sum_{i \in B_\sigma(M)} \Big( \prod_{j\in B_\sigma(M)\setminus i} t_j \Big) -  \prod_{j\in B_\sigma(M)\setminus e} t_j  \\ 
&=& \sum_{i \in B_\sigma(M) \setminus e} \Big( \prod_{j\in B_\sigma(M)\setminus i} t_j \Big)
\end{eqnarray*}
is divisible by $t_e$.
\end{itemize}

This means that $[\Sigma_{(M,e)}]^T \cdot \big( c^T_{r}(\mathcal S_M^\vee) - [-\Sigma_{(M/e)^\perp}]^T \big)$ is a multiple of the global polynomial $t_e$, and hence is in the ideal $I$ of \Cref{defin:chow}. Therefore $[\Sigma_{(M,e)}] \cdot \big( c_{r}(\mathcal S_M^\vee) - [-\Sigma_{(M/e)^\perp}] \big) = 0$ in the quotient $A^\bullet(\Sigma_E)$, as desired.
\end{proof}

\begin{rem}
Since \Cref{thm:main}.2 was established for matroids realizable over $\R$ in \cite{agostini2021likelihood}, one may attempt to give yet another proof of \Cref{thm:main}.2 via the following property of matroid valuations \cite{derksen2010valuative}:
If two functions $f(M)$ and $g(M)$ coincide for all matroids $M$ that are realizable over $\R$, and if the functions $f$ and $g$ are \emph{valuative} under matroid subdivisions \cite[Definition 3.10]{ardila2010valuations}, then $f(M)$ and $g(M)$ coincide for general, not necessarily realizable, matroids $M$.
The right-hand-side of \Cref{thm:main}.2, the beta invariant $\beta(M)$, is valuative \cite{ardila2010valuations}.  For the left-hand-side however, 
while the maps $M \mapsto \Sigma_{(M,e)}$ and $M \mapsto -\Sigma_{(M/e)^\perp}$ are each valuative, products of valuative functions are in general not valuative.  Thus, it is a priori unclear why the map $f: M\mapsto \deg(\Sigma_{(M,e)} \cdot -\Sigma_{(M/e)^\perp})$ is valuative.
We do not know any argument that establishes the valuativity of the left-hand-side of \Cref{thm:main}.2 independently of the theorem.
\end{rem}

\section{\textsf{Acknowledgments}}

We thank Bernd Sturmfels and Lauren Williams for organizing the Workshop on Nonlinear Algebra and Combinatorics from Physics  in April 2022 at the Harvard University Center for the Mathematical Sciences and Applications.
 In particular, Bernd asked the question that gave rise to this project.

\bibliography{bibli.bib}
\bibliographystyle{alpha}

\end{document}